\newtheorem{theorem}{Theorem}[section]
\newtheorem{lemma}[theorem]{Lemma}
\newtheorem{proposition}[theorem]{Proposition}
\newtheorem{corollary}[theorem]{Corollary}
\theoremstyle{definition}
\newtheorem{definition}[theorem]{Definition}
\theoremstyle{remark}
\newtheorem{remark}[theorem]{Remark}
\let\phi=\varphi
\let\theta=\vartheta
\def\epsilon{\varepsilon}
\def\d{\,\mathrm{d}}
\def\Ob{{\overline{\Omega}}}
\def\N{\mathbb{N}}
\def\R{\mathbb{R}}
\def\eps{\varepsilon}
\def\Ob{{\overline{\Omega}}}
\def\0{\mathbf{0}}
\newcommand{\comment}[1]{}
\newcommand{\norm}[1]{\left\Vert #1 \right\Vert} 
\numberwithin{equation}{section}
\let\epsilon=\varepsilon
\let\phi=\varphi
\let\theta=\vartheta
\def\@maketitle{%
  \newpage
  \null
  \vskip 2em%
  \begin{center}%
  \let \footnote \thanks
    {\Large\bfseries \@title \par}%
    \vskip 1.5em%
    {\normalsize
      \lineskip .5em%
      \begin{tabular}[t]{c}%
        \@author
      \end{tabular}\par}%
    \vskip 1em%
    {\normalsize \@date}%
  \end{center}%
  \par
  \vskip 1.5em}
\begin{document}

\title{\sc \LARGE Isometries of infinite dimensional Hilbert geometries}

\author{Bas Lemmens%
\thanks{Email: \texttt{B.Lemmens@kent.ac.uk}; Supported by EPSRC grant EP/J008508/1 (Corresponding author)}}
\affil{School of Mathematics, Statistics \& Actuarial Science,
University  of Kent, Canterbury, Kent CT2 7NX, UK}

\author{Mark Roelands%
\thanks{Email: \texttt{mark.roelands@gmail.com}  Supported by EPSRC grant EP/J500446/1}}
\affil{Unit for BMI, North-West University, Private Bag X6001-209, Potchefstroom 2520, South Africa}

\author{Marten Wortel%
\thanks{Email: \texttt{marten.wortel@gmail.com}; Supported by EPSRC grant EP/J008508/1}}
\affil{Unit for BMI, North-West University, Private Bag X6001-209, Potchefstroom 2520, South Africa}
\maketitle
\date{}

\begin{abstract}
In this paper we extend two classical  results  concerning the isometries of strictly convex Hilbert geometries, and the characterisation of the isometry groups of Hilbert geometries on finite dimensional simplices, to infinite dimensions. The proofs rely on a  mix of geometric and functional analytic methods.  
\end{abstract}

{\small {\sc Keywords:} Hilbert geometries, isometries, projective linear homomorphisms.}

{\small {\sc AMS Subject Classification:} Primary 58B20; Secondary 22F50, 46B04.}

\section{Introduction}

In \cite{Hil} Hilbert introduced a collection of metric spaces that are natural deformations of finite dimensional real hyperbolic spaces. Although Hilbert
limited his construction to finite dimensions, it has a straightforward extension to infinite dimensional spaces. Indeed, let $\Omega$ be a convex subset of a  (not necessarily
finite-dimensional) real vector space $Y$, and suppose that  for each $x \not= y \in \Omega$ the straight line $\ell_{xy}$ through $x$ and
$y$ has the property that $\Omega \cap \ell_{xy}$ is an open and bounded line segment in $\ell_{xy}$. In that case one can define Hilbert's metric on  $\Omega$  as
follows. For $x \neq y$ in $\Omega$, let $x'$ and $y'$ be the end-points of the segment
$\ell_{xy} \cap \Omega$ such that $x$ is between $x'$ and $y$, and $y$ is between $y'$ and $x$. Now {\em Hilbert's metric on $\Omega$} is given by 
\[
\delta_H(x,y) :=\log [x',x,y,y']\mbox{\quad for $x\neq y$ in $\Omega$,} 
\]
where 
\[
 [x',x,y,y'] := \frac{|x'-y|}{|x'-x|}\frac{|y'-x|}{|y'-y|}
\]
is the {\em cross-ratio}, and $\delta_H(x,x)=0$ for all $x\in \Omega$. 
The metric space $(\Omega,\delta_H)$ is usually called the {\em Hilbert geometry on $\Omega$}. In particular, the open unit ball in an infinite dimensional Hilbert space equipped with $\frac{1}{2}\delta_H$, is precisely Klein's model of the infinite dimensional hyperbolic space. A recent extensive overview of the theory of Hilbert geometries can be found in \cite{Handb}. 

The isometries between finite dimensional Hilbert geometries are well understood. They have been  studied intensively in the past decade by Bosch\'{e} \cite{Bo}, de la Harpe \cite{dlH}, Lemmens and Walsh \cite{LW}, Matveev and Troyanov \cite{MT}, Speer \cite{Sp}, and Walsh \cite{W2}. The purpose of this paper is to  analyse the isometries of infinite dimensional geometries 
on strictly convex domains and infinite dimensional simplices. To date there are only a few works on  infinite dimensional Hilbert geometries. We should mention the work  \cite{Mol1} by  Moln\'ar in which the group of Hilbert's metric isometries on the projective domain of the cone of positive self-adjoint operators on a complex Hilbert space is determined, and  \cite{BIM, MP} in which the  isometries of infinite dimensional hyperbolic space are studied. 

It is well known that Hilbert's metric has important applications in the analysis of linear, and nonlinear, operators on cones both in finite and infinite
dimensions, see \cite{LNBook,LNSurv,NMem}. In mathematical analysis one often works with Birkhoff's version of Hilbert's metric, which provides a slightly more
general set up than the one outlined above. Birkhoff's version of Hilbert's metric, denoted $d_H$,  is a metric on the set of rays in the interior, $C^\circ$, of  a closed cone $C$ in a normed space $X$. If  there exists a linear functional $\phi$ on $X$ with $\phi(x)>0$ for all $x\in C\setminus\{0\}$, then $\delta_H$ and $d_H$ coincide on $\Sigma_\phi=\{x\in C^\circ\colon \phi(x)>0\}$, see for example \cite[Theorem 2.1.2]{LNBook}. In general, however,  there may not exists such a linear functional, see Remark 2.3 for more details, and in this respect Birkhoff's version is more general. Another advantage of using cones is that Hilbert's metric can be expressed in terms of the partial order induced by the cone, and one can use ideas from the theory  of partially ordered vector spaces. In this paper we will be mainly working with Birkhoff's version of Hilbert's metric.

The paper has the following outline. In  Section~\ref{sec:general_birkhoff_theory} we shall, beside introducing the relevant definitions,  explain the relation between Hilbert's metric and Birkhoff's version of Hilbert's metric. Among other things we shall construct for a given Hilbert metric space $(\Omega,\delta_H)$ in $Y$, a real normed vector space $X$ containing $Y$ such that on $\Omega$ the relative norm topology of $X$ coincides with the Hilbert's metric topology. 

Subsequently we prove in Section 3 the following theorem, which generalises \cite[p. 163 (29.1)]{BK} and  \cite[Proposition 3]{dlH}. 
\begin{theorem}\label{thm:infinite_strictly_convex} 
If $\Omega_1$ and $\Omega_2$ are strictly convex Hilbert geometries, and  $f\colon\Omega_1\to\Omega_2$ is an isometry of $(\Omega_1,\delta_H)$ into
$(\Omega_2,\delta_H)$, then $f$ is a projective linear homomorphism. 
\end{theorem}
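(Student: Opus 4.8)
The plan is to deduce from strict convexity that $f$ is a \emph{collineation} --- that it carries straight line segments onto straight line segments --- and then to upgrade this to projective linearity by passing to two-dimensional slices, where the finite-dimensional theory of \cite{dlH} applies, and gluing the resulting projective maps. By Theorem~\ref{thm:cone_classification} I may realise each $\Omega_i$ as the cross section $\Sigma_{\phi_i}$ of the cone $C_i$ of an order unit space $(X_i,C_i,u_i)$, so that the goal becomes: produce a linear map $T\colon X_1\to X_2$ with $T(C_1)\subseteq C_2$ that induces $f$.

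\emph{$f$ is a collineation.} In any Hilbert geometry an affine segment is a geodesic and $\delta_H$ is additive along a chord. The effect of strict convexity is the converse: in a strictly convex Hilbert geometry $\delta_H(x,z)=\delta_H(x,y)+\delta_H(y,z)$ holds only if $y$ lies on the segment $[x,z]$ (equivalently, the triangle inequality is strict for non-collinear triples); here I would either quote the literature (e.g.\ \cite{Handb,dlH}) or give a short cross-ratio argument. Since $f$ is an isometric embedding it preserves metric additivity in both directions, so for $x,y,z\in\Omega_1$ one has $y\in[x,z]$ if and only if $f(y)\in[f(x),f(z)]$ in $\Omega_2$. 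Hence $f$ maps each affine segment of $\Omega_1$ bijectively and in an order-preserving way onto an affine segment of $\Omega_2$; in particular collinear triples go to collinear triples, and on each chord the induced map, being a $\delta_H$-isometry between real intervals, is a projective map of the line.

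\emph{Reduction to two dimensions and gluing.} We may assume $\Omega_1$ is not contained in a line, the one-dimensional case being already covered by the previous paragraph. Fix $o\in\Omega_1$, with $\hat o\in C_1^\circ$ the corresponding cone element, and fix a cone element $\hat y_o\in C_2$ over $f(o)$. For a plane $P\ni o$ meeting $\Omega_1$ in a two-dimensional set, $P\cap\Omega_1$ is again strictly convex (a plane section of a strictly convex set is strictly convex), its image under $f$ is convex (by the previous step) and $\delta_H$-isometric to $P\cap\Omega_1$, hence homeomorphic to $\mathbb{R}^2$ and therefore two-dimensional; so $f(P\cap\Omega_1)$ lies in a plane $P'$, and $f$ restricts to an isometric embedding of $P\cap\Omega_1$ into the planar strictly convex Hilbert geometry $P'\cap\Omega_2$. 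By \cite[Proposition~3]{dlH} --- or the finite-dimensional fundamental theorem of projective geometry, applicable since $f$ is a line-preserving injection between open convex subsets of planes --- this restriction is projective linear; lifting to the three-dimensional subcone of $C_1$ over $P\cap\Omega_1$ yields a linear $T_P$ on that subspace, mapping the subcone into $C_2$, inducing $f$ there, and --- after rescaling so that $T_P(\hat o)=\hat y_o$ --- uniquely determined, because a linear map inducing a prescribed collineation on a slice of linear dimension at least three, or a prescribed projectivity on a line, is unique up to a scalar. Since $\hat o$ is interior to $C_1$, every three-dimensional subspace of $X_1$ through $\hat o$ meets $\Omega_1$ in a two-dimensional set, and every vector of $X_1=C_1-C_1$ lies in such a subspace; uniqueness of the normalised $T_P$ makes them agree on overlaps, so they glue to a single map $T$ on $X_1$, which is linear (checked inside one three-dimensional subspace at a time), satisfies $T(C_1)\subseteq C_2$, and induces $f$. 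Thus $f$ is a projective linear homomorphism.

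\emph{Main obstacle.} The delicate part is the last step: in infinite dimensions there is no classical fundamental theorem of projective geometry to quote directly, and $f$ need not be surjective, so one is forced to slice down to finite dimensions and then reassemble. Care is needed to see that planes map onto planes (using injectivity of $f$ together with the fact that no two-dimensional Hilbert geometry embeds isometrically in a line), that the finite-dimensional projective maps on the various slices admit a common normalisation, and that the glued map is genuinely linear on all of $X_1$; a lesser point is pinning down the strict-triangle-inequality characterisation of segments used to establish that $f$ is a collineation.
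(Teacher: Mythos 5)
Your first step coincides with the paper's: strict convexity gives unique geodesics (Lemma~\ref{lem:uniq}), so the isometry carries segments onto segments, and on each chord it preserves the cross-ratio with the two boundary endpoints and is therefore a projectivity of that line. Where you genuinely diverge is in the linearization. The paper never invokes the fundamental theorem of projective geometry, even on finite-dimensional slices: it builds the linear map by hand, first on the two-dimensional span of each chord (Lemma~\ref{two-dimensional base change}), then one dimension at a time (Proposition~\ref{extension of collineation}, where the value at a new point $w$ is pinned down as the intersection of two transversal chords), finishing with Zorn's lemma (Theorem~\ref{isometry on hilbert is collinear}). You instead fix a base point, linearize on every three-dimensional subcone over a planar slice through it, normalize at the base point, and glue using uniqueness of normalized lifts on the two-dimensional overlaps. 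This is a viable alternative: it trades the transfinite extension for a direct gluing, which is structurally cleaner, and your well-definedness and linearity checks on overlaps do go through. The cost is that you outsource the planar case to a local, non-surjective form of the fundamental theorem of projective geometry (a continuous segment-preserving injection of an open convex planar domain whose image is not contained in a line is projective). That statement is true but is exactly the point the paper chose not to take for granted; note also that \cite[Proposition 3]{dlH} concerns isometries onto, so it does not directly apply to your restricted maps, and you would need to prove or precisely cite the local statement.

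Two further points you should not leave implicit. First, you only establish $T(C_1)\subseteq C_2$, whereas the paper's definition of a projective linear homomorphism requires $T$ to be bi-positive. This needs the boundary analysis of Lemma~\ref{isometry in strictly convex cone}: if $x_n\to x\in\partial\Sigma_1$ then $d_H(x_n,p)\to\infty$ forces $f(x_n)$ towards $\partial\Sigma_2$, so each $T_P$ maps the boundary of its subcone into $\partial C_2$, and the segment argument then shows $T_Px\notin C_2$ for $x\notin C_1$. This fits naturally inside each of your finite-dimensional slices but must be carried out. Second, your uniqueness claim on a two-dimensional overlap $W$ tacitly uses that the image of the chord $W\cap\Sigma_1$ spans exactly a two-dimensional subspace of $X_2$ (so that the two candidate linear maps have the same range and differ by a scalar, which the normalization at the base point kills); this is true because $f$ maps the chord onto a chord, but it should be stated.
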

The notion of a projective linear homomorphism will be given in Definition \ref{prohom}. 

As we are working in infinite dimensions, one cannot use a projective basis and the fundamental theorem of projective geometry to prove Theorem \ref{thm:infinite_strictly_convex}. Instead we establish an extension result, Proposition \ref{extension of collineation}, which can be combined with Zorn's Lemma to prove Theorem \ref{thm:infinite_strictly_convex}.

The second main result is proved in Section 4, and concerns the isometries of Hilbert geometries on infinite dimensional simplices. A natural generalisation of finite dimensional simplices to infinite dimensions is the set 
\[ \Delta(K, \mu)^\circ := \{f\in C(K)\colon f(x)>0 \mbox{ for all }x\in K\mbox{ and $\int f \d \mu=1$}\}, \]
where $C(K)$ is the set of continuous functions on a compact Hausdorff space $K$ and $\mu$ is a finite, strictly positive, Borel measure on $K$, i.e., $\mu(K)<\infty$ and $\int f\mathrm{d}\mu>0$ for all $f\in C(K)$ with $f\neq 0$ and $f(x)\geq 0$ for all $x\in K$. Note that the set 
\[
 \Delta^\circ_\infty:= \{ x \in \ell^\infty \colon x_i>0\mbox{ for all $i \in \N$ and $\phi(x) := \sum_{i=1}^\infty 2^{-i} x_i =1$}\} 
\] is a special case. Indeed, as $\ell^\infty$ is an abstract $M$-space with an order unit, it follows from Kakutani's representation theorem that $\ell^\infty$  is isometrically order-isomorphic to $C(K)$ for some compact Hausdorff space $K$, see \cite[Theorem 1.b.6]{LT}.  

For the Hilbert geometries $(\Delta(K,\mu)^\circ, \delta_H)$ we have the following result. 
\begin{theorem}\label{thm:1.2}
If $K_1$ and $K_2$ are compact Hausdorff spaces with finite, strictly positive, Borel measures $\mu_1$ and $\mu_2$, respectively, then $h\colon
\Delta(K_1,\mu_1)^\circ\to
\Delta(K_2,\mu_2)^\circ$ is a surjective Hilbert's metric  isometry if and only if there exist $\epsilon \in\{-1,1\}$, a homeomorphism $\theta\colon K_2\to
K_1$, and $g\in C(K_2)$ with $g(x)>0$ for all $x\in K_2$ such that 
\[
h(f)=\frac{g\cdot (f\circ \theta)^\epsilon}{\int g\cdot (f\circ \theta)^\epsilon \d \mu_2}.
\]  
If $(K_1, \mu_1) = (K_2, \mu_2) = (K,\mu)$ and $|K|\geq 3$, then the isometry group is given by 
\[
\mathrm{Isom}(\Delta(K,\mu)^\circ, \delta_H) \cong  \overline{C(K)} \rtimes (C_2 \times \mathrm{Homeo}(K)),
\]
where $C_2$ is the cyclic group of order $2$, $\overline{C(K)}:= C(K)/\mathbb{R}\mathbf{1}$, and $\mathbf{1}$ is the constant one function on $K$. 
\end{theorem}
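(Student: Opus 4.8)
The plan is to reduce everything to a Banach--Stone-type classification of the linear isometries of $C(K)/\mathbb{R}$ (here $\mathbb{R}$ denotes the subspace of constant functions), via the logarithm. The converse (\emph{only if}) direction is the substance; the \emph{if} direction is a short direct check. Since $\delta_H$ is a projective metric, the scalar normalisation by $\int g(f\circ\theta)^\epsilon\d\mu_2$ does not affect it; multiplication by the fixed positive function $g$ cancels in every pointwise ratio $h(f_1)(x)/h(f_2)(x)$; precomposition with the homeomorphism $\theta$ preserves suprema and infima; and $f\mapsto f^{-1}$ interchanges $\sup_x f_1(x)/f_2(x)$ with $\bigl(\inf_x f_1(x)/f_2(x)\bigr)^{-1}$, so it leaves $\delta_H(f_1,f_2)=\log\bigl(\sup_x \tfrac{f_1(x)}{f_2(x)}\big/\inf_x\tfrac{f_1(x)}{f_2(x)}\bigr)$ unchanged. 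Hence every map of the displayed form is a surjective Hilbert's metric isometry.

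For the converse I would first turn $(\Delta(K,\mu)^\circ,\delta_H)$ into a Banach space. From the Birkhoff description of $\delta_H$ on the order unit space $C(K)$ (Section~\ref{sec:general_birkhoff_theory}) one gets $\delta_H(f_1,f_2)=\mathrm{osc}(\log f_1-\log f_2)$, where $\mathrm{osc}(u):=\max u-\min u$. Thus $\Psi\colon f\mapsto\log f+\mathbb{R}$ is a well-defined bijection of $\Delta(K,\mu)^\circ$ onto the quotient Banach space $C(K)/\mathbb{R}$ equipped with the complete norm $\|u+\mathbb{R}\|:=\mathrm{osc}(u)$ (proportional to the quotient norm); surjectivity uses that $e^u/\int e^u\d\mu\in\Delta(K,\mu)^\circ$ for every $u\in C(K)$, and $\Psi$ is an isometry by the displayed identity. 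A surjective isometry $h\colon\Delta(K_1,\mu_1)^\circ\to\Delta(K_2,\mu_2)^\circ$ therefore induces a surjective isometry $A:=\Psi_2\circ h\circ\Psi_1^{-1}$ of Banach spaces, and by the Mazur--Ulam theorem $A$ is affine: $A=T+[w]$ with $T$ a surjective linear isometry and $[w]\in C(K_2)/\mathbb{R}$.

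Next I would classify the surjective linear isometries $T\colon C(K_1)/\mathbb{R}\to C(K_2)/\mathbb{R}$. The dual of $C(K)/\mathbb{R}$ is $M_0(K):=\{\nu\in M(K)\colon\nu(K)=0\}$ with norm $\tfrac12\|\cdot\|_{TV}$, and the extreme points of its weak$^*$-compact unit ball are precisely the $\pm(\delta_x-\delta_y)$ with $x\neq y$. As $T^*$ is again a surjective linear isometry, it carries this extreme set bijectively onto the analogous one, so $T^*(\delta_x-\delta_y)=\epsilon_{x,y}(\delta_{a(x,y)}-\delta_{b(x,y)})$. Feeding in the additivity relation $(\delta_x-\delta_y)+(\delta_y-\delta_z)=\delta_x-\delta_z$ and matching cancellations, a combinatorial argument---this is where $|K|\geq 3$ is needed to force a uniform sign---yields a single $\epsilon\in\{-1,1\}$ and a bijection $\theta\colon K_2\to K_1$ with $T^*(\delta_x-\delta_y)=\epsilon(\delta_{\theta(x)}-\delta_{\theta(y)})$. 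Weak$^*$-continuity of $T^*$ and of $x\mapsto\delta_x$, applied both to $T^*$ and to $(T^*)^{-1}=(T^{-1})^*$, shows $\theta$ is a homeomorphism. Dualising back gives $T(u+\mathbb{R})=\epsilon(u\circ\theta)+\mathbb{R}$, hence $h(f)=\Psi_2^{-1}\bigl(\epsilon(\log f)\circ\theta+w+\mathbb{R}\bigr)=g(f\circ\theta)^\epsilon\big/\int g(f\circ\theta)^\epsilon\d\mu_2$ with $g:=e^w>0$, the asserted form.

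Finally, for the group statement take $(K_1,\mu_1)=(K_2,\mu_2)=(K,\mu)$. By Mazur--Ulam and the above, $\mathrm{Isom}(\Delta(K,\mu)^\circ,\delta_H)$ is the affine isometry group of $C(K)/\mathbb{R}$, which is the semidirect product of its normal subgroup of translations---canonically the additive group $C(K)/\mathbb{R}$, written $\overline{C(K)}$---with the stabiliser of $0$, namely the linear isometry group. The assignment $(\epsilon,\theta)\mapsto\bigl(u+\mathbb{R}\mapsto\epsilon(u\circ\theta)+\mathbb{R}\bigr)$ is a group isomorphism from $C_2\times\mathrm{Homeo}(K)$ onto the latter: surjective by the classification, a homomorphism with $C_2$ central by inspection, and injective exactly because $|K|\geq 3$ (for $|K|=2$ the transposition acts as $\epsilon=-1$). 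Assembling these identifications gives $\mathrm{Isom}(\Delta(K,\mu)^\circ,\delta_H)\cong\overline{C(K)}\rtimes(C_2\times\mathrm{Homeo}(K))$. The main obstacle is the middle step: pinning down the extreme points of the dual unit ball and, more delicately, showing that the pair $(\epsilon,\theta)$ is globally consistent and that $\theta$ is a homeomorphism---this is what replaces, in infinite dimensions, De la Harpe's appeal to the fundamental theorem of projective geometry.
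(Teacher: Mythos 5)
Your proposal is correct and follows the same overall strategy as the paper: transport $\delta_H$ to $\overline{C(K)}=C(K)/\R\mathbf{1}$ via the pointwise logarithm, note that the oscillation norm is the quotient norm induced by $2\|\cdot\|_\infty$, invoke Mazur--Ulam to reduce to surjective \emph{linear} isometries, pass to the adjoint on $\R\mathbf{1}^\perp\subseteq M(K)$, identify the extreme points of the dual unit ball as the differences $\delta_s-\delta_t$ with $s\neq t$ (this is the paper's Proposition~\ref{extreme points}, and it does require the lattice argument given there rather than mere assertion), and read off $(\eps,\theta)$ from the induced permutation of these extreme points; the group statement then comes from splitting the affine isometry group into translations and linear isometries, exactly as in Proposition~\ref{prop:isometry_group}. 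The one place you genuinely diverge is the combinatorial step extracting a single sign $\eps$ and a single bijection $\theta$ from the pointwise data $T^*(\delta_x-\delta_y)=\eps_{x,y}(\delta_{a(x,y)}-\delta_{b(x,y)})$: you exploit the additive relation $(\delta_x-\delta_y)+(\delta_y-\delta_z)=\delta_x-\delta_z$ and linearity of $T^*$, whereas the paper uses a purely metric invariant, namely that the maximal equilateral subsets of $\mathrm{ext}(B)$ of mutual distance $1$ are the sets $\pm E_s$ with $E_s=\{\delta_s-\delta_t\colon t\neq s\}$ (Lemma~\ref{maximal sets in ext(B)}), followed by a disjointness argument ($E_s\cap E_t=\emptyset$ while $E_s\cap -E_t\neq\emptyset$) to make the sign uniform; both routes work. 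One small correction: the uniform sign does \emph{not} require $|K|\geq 3$ --- the classification in the first half of the theorem holds for all $K$ (for $|K|\leq 2$ your additivity argument is vacuous, but the statement is then trivial or handled by a direct check) --- the hypothesis $|K|\geq 3$ enters only, as you correctly note afterwards, to make $(\eps,\theta)\mapsto T$ injective, so that the stabiliser of the origin is genuinely $C_2\times\mathrm{Homeo}(K)$.
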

Theorem \ref{thm:1.2} generalises the characteristion of the isometries of the Hilbert geometry on finite dimensional simplices obtained in \cite{dlH}, see also
\cite [Theorem 5.1]{FM} and \cite[Theorem 1.2]{LW}.  As a  direct consequence we obtain the following result. 
\begin{corollary}
If $K_1, K_2$ are compact Hausdorff spaces,  then $(\Delta(K_1,\mu_1)^\circ,d_H)$ and $(\Delta(K_2,\mu_2)^\circ,d_H)$ are isometric if and only if $K_1$ and $K_2$ are homeomorphic.
\end{corollary}

\section{Preliminaries}\label{sec:general_birkhoff_theory}
In this section we introduce the basic concepts and preliminary results. A cone $C$ in a vector space $X$ is a convex set such that $\lambda C\subseteq C$ for all $\lambda\geq 0$ and $C\cap (-C)=\{0\}$. A cone $C$ induces a partial
ordering $\leq_C$ on $X$ by $x\leq_C y$ if $y-x\in C$. The cone is said to be {\em Archimedean} if for each $x\in X$ and $y\in C$ with $nx\leq_C y$ for all
$n=1,2,3,\ldots$ we have  that $x\leq_C 0$. 
An element $u\in C$ is called an {\em order unit} if for each $x\in X$ there exists $\lambda>0$ such that $x\leq_C\lambda u$. The triple $(X,C,u)$ is called an {\em order unit space}, if $C$ is an Archimedean cone in $X$ and $u$ is an  order unit for $C$. 

Given an order unit space $(X,C,u)$, the space $X$ can be equipped with the so-called {\em order unit norm}, 
\[
\|x\|_u:=\inf\{\lambda>0\colon -\lambda u\leq_C x\leq_C\lambda u\}.
\]
By \cite[Theorem 2.55(2)]{AT}, $C$ is closed under $\|\cdot\|_u$. Note also that $\|\cdot\|_u$ is a {\em monotone norm}, i.e., $\|x\|_u\leq\|y\|_u$ for all $0\leq_C x\leq_C y$, and hence 
$C$ is normal with respect to $\|\cdot\|_u$. Recall that a cone $C$ in a normed space $(X,\|\cdot\|)$ is called {\em normal} if there exists a constant $\kappa>0$ such that $\|x\|\leq \kappa \|y\|$ whenever $0\leq_C x\leq_C y$. Furthermore, $C$ has nonempty interior, $C^\circ$, with respect to $\|\cdot\|_u$, as the following lemma shows. 
\begin{lemma}\label{lem:order_units_interior}
 If $(X,C,u)$ is an order unit space, then the set of order units of $C$ coincides with $C^\circ$.
\end{lemma}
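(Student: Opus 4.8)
The plan is to prove the two inclusions $C^\circ \subseteq \{\text{order units}\}$ and $\{\text{order units}\} \subseteq C^\circ$ separately, working throughout with the order unit norm $\|\cdot\|_u$.

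For the inclusion showing every interior point is an order unit, I would take $x \in C^\circ$ and choose $\eps > 0$ so that the open ball $B(x,\eps)$ is contained in $C$. Given an arbitrary $y \in X$, I want to find $\lambda > 0$ with $y \leq_C \lambda u$, equivalently $\lambda u - y \in C$. The idea is to move from $x$ a small amount in a direction related to $y$: for suitably small $t > 0$, the point $x - t y$ lies in $B(x,\eps) \subseteq C$, so $x - ty \in C$, i.e.\ $ty \leq_C x$. Since $u$ is an order unit there is $\mu > 0$ with $x \leq_C \mu u$, hence $ty \leq_C \mu u$ and thus $y \leq_C (\mu/t) u$. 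The same argument applied to $-y$ shows $x$ itself need not even be used symmetrically — but in any case this gives that $x$ is an order unit. The only mild point to be careful about is that $\|\cdot\|_u$ is a genuine norm (not just a seminorm), which requires $C \cap (-C) = \{0\}$ together with $C$ being Archimedean; this is part of the standing hypotheses on an order unit space, and it is what makes $B(x,\eps)$ a meaningful neighbourhood.

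For the reverse inclusion, let $x$ be an order unit of $C$; I must show $x \in C^\circ$, i.e.\ that some $\|\cdot\|_u$-ball around $x$ lies in $C$. Since $x$ is an order unit, there is $\lambda > 0$ with $u \leq_C \lambda x$, so $u/\lambda \leq_C x$, i.e.\ $x - u/\lambda \in C$. Set $\eps := 1/(2\lambda) > 0$. I claim $B(x,\eps) \subseteq C$. Indeed, if $\|x - z\|_u < \eps$, then by definition of the order unit norm we have $-\eps' u \leq_C x - z \leq_C \eps' u$ for some $\eps' < \eps$, in particular $z - x \leq_C \eps' u \leq_C (1/(2\lambda)) u$... wait, I need $z \geq_C 0$, so I use the other side: $x - z \leq_C \eps' u$ gives $z \geq_C x - \eps' u \geq_C x - (1/(2\lambda)) u$. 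Combining with $x \geq_C u/\lambda$ yields $z \geq_C u/\lambda - u/(2\lambda) = u/(2\lambda) \geq_C 0$, since $u \in C$. Hence $z \in C$, and $B(x,\eps) \subseteq C$, so $x \in C^\circ$.

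The main obstacle, such as it is, lies in the second inclusion: one has to extract from $\|x-z\|_u < \eps$ a usable order-theoretic inequality of the form $\pm(x-z) \leq_C \eps' u$, which is exactly the content of the definition of $\|\cdot\|_u$ (the infimum being attained up to an arbitrarily small margin), and then combine it correctly with $u \leq_C \lambda x$ so that the negative contribution $-\eps' u$ is dominated by the positive contribution $u/\lambda$. Choosing $\eps$ strictly smaller than $1/\lambda$ (here $1/(2\lambda)$) makes this work with room to spare. Everything else is bookkeeping with the cone axioms: $\lambda C \subseteq C$ for $\lambda \geq 0$, convexity of $C$ (to add elements of $C$), and $u \in C$.
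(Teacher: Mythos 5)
Your proof is correct and follows essentially the same route as the paper: for the inclusion of order units into $C^\circ$ the paper argues exactly as you do (from $u \leq_C \lambda x$ one gets that a small $\|\cdot\|_u$-ball around $x$ stays in $C$), and for the other inclusion it simply cites a reference where your direct argument ($x - ty \in C$ for small $t>0$) is the standard proof. One slip in that first half: since the claim is that $x$ is an order unit, the target inequality is $y \leq_C \lambda x$, not $y \leq_C \lambda u$; your key step $ty \leq_C x$ already yields this with $\lambda = t^{-1}$, and the subsequent detour through $x \leq_C \mu u$ to conclude $y \leq_C (\mu/t)u$ only re-proves that $u$ is an order unit, which is a hypothesis.
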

\begin{proof}
 Every interior point is an order unit by \cite[Lemma~2.5]{AT}. Conversely, if $x \in C$ is an order unit,
then there exists $M > 0$ such that $u / M \leq x$. If $\| y \|_u \leq 1 / M$, then $x-y \geq x-u/ M \geq 0$, so $x \in C^\circ$.
\end{proof}

Throughout the paper we shall always assume that an order unit space is equipped with the order unit norm. 

A linear functional $\phi\colon X\to \mathbb{R}$ on an order unit space $(X,C,u)$ is said to be  {\em positive} if $\phi(C)\subseteq [0,\infty)$. It is said to {\em strictly positive} if  $\phi(C\setminus\{0\})\subseteq (0,\infty)$.  A positive linear functional $\phi$ with $\phi(u)=1$ is called a {\em state} of $(X,C,u)$. Strictly positive states are always continuous and $\|\phi\|=1$, as can be seen from the following lemma applied to $X_2=\mathbb{R}$. 
\begin{lemma}\label{lem:automatic_continuity}
 Let $(X_1, C_1, u_1)$ and $(X_2,C_2 ,u_2 )$ be order unit spaces.  If $T \colon X_1 \to X_2$ is a linear map such that $T(C_1)\subseteq C_2$, then $T$ is
continuous with $\norm{T} = \norm{Tu_1}_{u_2}$.
\end{lemma}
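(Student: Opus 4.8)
The plan is to prove the norm identity $\|T\| = \|Tu_1\|_{u_2}$ directly; continuity of $T$ is then automatic, because the order unit norm is finite on all of $X_2$ (as $u_2$ is an order unit, every element of $X_2$ is sandwiched between $-\lambda u_2$ and $\lambda u_2$ for some $\lambda>0$), so the right-hand side is a finite number and a bounded operator is continuous.

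First I would show that $\|Tx\|_{u_2} \leq \|x\|_{u_1}\,\|Tu_1\|_{u_2}$ for every $x \in X_1$. Fix $\lambda > \|x\|_{u_1}$ and $\mu > \|Tu_1\|_{u_2}$. By definition of the order unit norm on $X_1$ we have $-\lambda u_1 \leq_{C_1} x \leq_{C_1} \lambda u_1$, and applying $T$ together with $T(C_1) \subseteq C_2$ yields $-\lambda Tu_1 \leq_{C_2} Tx \leq_{C_2} \lambda Tu_1$. Since $u_1 \in C_1$, also $Tu_1 \in C_2$, and $-\mu u_2 \leq_{C_2} Tu_1 \leq_{C_2} \mu u_2$; multiplying the latter by $\lambda>0$ and chaining the order relations gives $-\lambda\mu u_2 \leq_{C_2} Tx \leq_{C_2} \lambda\mu u_2$, whence $\|Tx\|_{u_2} \leq \lambda\mu$. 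Letting $\lambda \downarrow \|x\|_{u_1}$ and $\mu \downarrow \|Tu_1\|_{u_2}$ gives the bound, so $T$ is bounded with $\|T\| \leq \|Tu_1\|_{u_2}$.

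For the reverse inequality I would note that $\|u_1\|_{u_1} = 1$: the relations $-\lambda u_1 \leq_{C_1} u_1 \leq_{C_1} \lambda u_1$ hold for every $\lambda \geq 1$, while for $0 < \lambda < 1$ the relation $u_1 \leq_{C_1} \lambda u_1$ would force $(1-\lambda)u_1 \in C_1 \cap (-C_1) = \{0\}$, which is impossible since $u_1 \neq 0$. Hence $\|Tu_1\|_{u_2} \leq \|T\|\,\|u_1\|_{u_1} = \|T\|$, and combining the two inequalities yields $\|T\| = \|Tu_1\|_{u_2}$.

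This argument is essentially mechanical, so I do not expect a genuine obstacle. The one point worth a little care is to run the whole computation with the strict inequalities $\lambda > \|x\|_{u_1}$ and $\mu > \|Tu_1\|_{u_2}$ and pass to the limit only at the end, which avoids having to know that the infimum defining the order unit norm is attained. Alternatively, one may invoke closedness of $C_1$ and $C_2$ (available from \cite[Theorem 2.55(2)]{AT}, as recalled in the text) to use the sandwich relations at $\lambda = \|x\|_{u_1}$ and $\mu = \|Tu_1\|_{u_2}$ directly, which trims the bookkeeping slightly.
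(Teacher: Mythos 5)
Your proof is correct and follows essentially the same route as the paper: sandwich $x$ between $\pm\lambda u_1$, apply $T$, and use monotonicity of $\norm{\cdot}_{u_2}$ (which you unpack by chaining order inequalities), plus $\norm{u_1}_{u_1}=1$ for the reverse inequality, which the paper leaves implicit.
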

\begin{proof}
 If $\norm{x}_{u_1} \leq 1$, then $-u_1 \leq_{C_1} x \leq_{C_1} u_1$, so that $-Tu_1 \leq_{C_2} Tx \leq_{C_2} Tu_1$. The statement now follows from the definition of $\norm{\cdot}_{u_2}$.
\end{proof}

\begin{remark}
 As mentioned in the introduction, there may not exist a strictly positive functional for a given order unit space $(X,C,u)$. Indeed, if $X$ is the vector space of bounded functions on an uncountable set $K$, $C$ is the  Archimedean cone of functions taking nonnegative values everywhere, and $u$ is the constant one function on $K$, then no strictly positive functional exists, see \cite[Exercise~6,~Section~1.7]{AT}. However, if $X$ is separable, then a strictly positive state always exists. Indeed, in that case the unit
ball $B_{X^*}$ of $X^*$ (which is weak*-compact) is weak*-metrizable, and so there exists a sequence of states $\phi_n$ which is weak*-dense in the set of all
states. A standard argument then shows that the state $\phi := \sum_{n=1}^\infty 2^{-n} \phi_n$ is strictly positive.
\end{remark}

If $(X,C,u)$ is an order unit space and $x,y\in C^\circ$, then it follows from  Lemma~\ref{lem:order_units_interior} that there exist
$0<\alpha\leq \beta$ such that $\alpha y\leq_C x\leq_C \beta y$, and hence we can define 
\[
M(x/y):=\inf\{\beta >0\colon x\leq_C \beta y\}<\infty.
\]
On $C^\circ$ {\em (Birkhoff's version of) Hilbert's metric} is given by 
\[
d_H(x,y) :=\log\left(M(x/y)M(y/x)\right).
\] 
It is easy to verify that $d_H(\lambda x,\mu y)=d_H(x,y)$ for all $\lambda,\mu>0$, and hence $d_H$ is not  a metric. However, it can be shown that $d_H$ is a
metric between pairs of rays in $C^\circ$, see \cite[Lemma 2.1]{LNSurv}. We shall denote the  projective spaces obtained by identifying points on rays inside
$C^\circ$ by $P(C^\circ)$. So,
$(P(C^\circ),d_H)$ is a metric space. 

In case there exists a strictly positive state $\varphi\colon X\to\mathbb{R}$, we can
identify $P(C^\circ)$ with the cross section
\[
\Sigma_\varphi:=\{x\in C^\circ\colon \varphi(x) =1\}.
\] 
We shall use the following notation. For $x\in C^\circ$, we write $[x]:= x/\varphi(x)\in \Sigma_\varphi$. The boundary of $\Sigma_\varphi$ relative to the
affine space $\{x\in X\colon \varphi(x)=1\}$ in $(X,\|\cdot\|_u)$  is denoted by $\partial \Sigma_\varphi$. Its closure will be denoted by $\overline{\Sigma}_\phi$. 

The following theorem shows the relation between Hilbert geometries and order unit spaces with strictly positive functionals.
\begin{theorem}\label{thm:cone_classification}
If $Y$ is a vector space and $\Omega \subset Y$ is a convex set on which $\delta_H$ is well defined, then there exists an order unit space $(X,C,u)$ and
a strictly positive state $\phi$ on $X$ such that $\Omega$ is affine isomorphic to $\Sigma_\phi$.

Conversely, if $(X,C,u)$ is an order unit space with a strictly positive state $\phi$, then $\Sigma_\phi$ is a convex set on which $\delta_H$ is well defined.
\end{theorem}
\begin{proof}
Let $\Omega \subset Y$ be a convex set on which $\delta_H$ is well defined, i.e., $\Omega \cap \ell_{xy}$ is open and bounded for every $x,y \in
\Omega$ with $x \not= y$. By translating we may assume without loss of generality that $0 \in \Omega$ and, by restricting
to the span of $\Omega$, we may also assume that $Y =\mathrm{Span}\,\Omega$. 

We claim that these assumptions imply that $\Omega \cap \ell_{0y}$ is open and bounded for
every nonzero $y \in Y$. Indeed, let $y \in Y$ be nonzero, then $y = \sum_{i=1}^n \lambda_i y_i$ for some $y_i \in \Omega$ and $\lambda_i \in \R$, since
$\Omega$ spans $Y$. In this representation we may assume that each $\lambda_i > 0$. Indeed, if $\lambda_i < 0$ for some $i$, then we can replace $y_i$ by $-\eps
y_i$ for some small $\epsilon$ (since $\ell_{0 y_i} \cap \Omega$ is open) and $\lambda_i$ by $-\eps^{-1} \lambda_i > 0$. Now if $\lambda := \sum_{i=1}^n
\lambda_i$, then each $\lambda^{-1} \lambda_i>0$ and they sum to $1$. So, 
\[ \lambda^{-1} y = \sum_{i=1}^n \lambda^{-1} \lambda_i y_i \in \Omega \]
by convexity of $\Omega$, and hence $\Omega \cap \ell_{0y} = \Omega \cap \ell_{0 (\lambda^{-1} y)}$ is open and bounded.

By considering lines through $0$, this implies that $\Omega$ is absorbing, i.e., for
every $y \in Y$, there exists an $\eps > 0$ such that $\lambda y \in \Omega$ for all $|\lambda| < \eps$. Define
\[ \Ob := \{y \in Y \colon \lambda y \in \Omega \mbox{ for all } 0 \leq \lambda < 1 \} \]
and note that  $\Ob$ is convex.

Let $X:=Y \oplus \R$ and consider the cone 
\[ C_\Omega := \{ \lambda (y,1) \in X \colon \lambda \geq 0\mbox{ and } y \in \Ob \}. \]
We now show that  $C_\Omega$ is Archimedean and $(0,1)\in X$ is an order unit. 

Let  $(y, \lambda) \in X$ with $y\in Y$ and $\lambda\in\mathbb{R}$. To show  that $(0,1)\in X$ is an order unit, we have to find $M > 0$ with $-M(0,1) \leq_{C_\Omega} (y, \lambda) \leq_{C_\Omega}
M(0,1)$. These inequalities hold if and only if $(y, \lambda + M), (-y, M - \lambda) \in C_\Omega$, which is equivalent to $y / (M+\lambda), -y / (\lambda - M)
\in
\Ob$. But the existence of such an $M$ now follows from the fact that $\Ob$ is absorbing.

To show that $C_\Omega$ is Archimedean, it suffices to prove that $n(y, \lambda) \leq_{C_\Omega} (0,1)$ for all large enough $n$ implies that $-(y, \lambda) \in C_\Omega$,
as $(0,1)$ is an order unit. The assumed inequality is equivalent with $(-ny, 1-n\lambda) \in C_\Omega$, which shows that $\lambda \leq 0$. If $\lambda = 0$,
then $-ny \in \Ob$, which implies by the boundedness of $\ell_{0y} \cap \Omega$ that $y =
0$ and so $-(y,\lambda) = (0,0) \in C_\Omega$. If $\lambda < 0$, then by scaling we may assume that $\lambda = -1$, and then dividing the inequality by $1+n$
yields
\[ \frac{n}{1+n} (-y) \in \Ob. \]
Hence $-y \in \Ob$, and so $-(y, \lambda) = (-y, 1) \in C_\Omega$.

Obviously, the linear functional $\varphi \colon X \to \R$ defined by $\phi((y,s)):=s$ is strictly
positive with respect to $C_\Omega$ and $\phi((0,1)) = 1$, so $\phi$ is a strictly positive  state.

Let us now show that $\Omega$ is affine isomorphic to  $\Sigma_\varphi := \{x \in C^\circ_\Omega \colon \phi(x) = 1\}$. First assume that $y\in \overline{\Omega}\setminus\Omega$. Note that if we can prove that $(y,1)$ is not an order unit, then $(y,1)\not\in C^\circ_\Omega$ by Lemma \ref{lem:order_units_interior}. 
Suppose by way of contradiction that $(y,1)$ is an order unit. Then there exists $M>1$ such that 
$(0,1)\leq_{C_\Omega} M(y,1)$, which is equivalent with $\frac{M}{M-1}y\in\overline{\Omega}$. Taking $0<\lambda :=\frac{M-1}{M}<1$ we get that $y = \lambda \frac{M}{M-1} y\in\Omega$, which is absurd. 
Now suppose that $y\in\Omega$. We need to show that $(y,1)\in C^\circ_\Omega$. As $(0,1)\in C^\circ_\Omega$ is an order unit, it suffices to show by Lemma \ref{lem:order_units_interior} that there exists $M>1$ such that $(0,1)\leq_{C_\Omega} M(y,1)$. Recall that $\ell_{0y}\cap\Omega$ is an open subset of $\ell_{0y}$. Hence there exists $M>1$ such that $\frac{M}{M-1}y\in \ell_{0y}\cap\Omega$, which implies that $(0,1)\leq_{C_\Omega} M(y,1)$. We conclude that  $\Omega$ is affine isomorphic to  $\Sigma_\varphi := \{x \in C^\circ_\Omega \colon \phi(x) = 1\}$. 

To prove the second part, we note that for distinct $x,y\in\Sigma_\varphi$  the points  
$w_x:=x-M(y/x)^{-1}y$ and $w_y:= y - M(x/y)^{-1}x$ are in $\partial C\setminus\{0\}$, as $C$ is closed. So, $[w_x]$ and $[w_y]$ are the end-points of the
straight line segment $\ell_{xy}\cap \Sigma_\varphi$. 
\end{proof}

\begin{remark}
In the proof of Theorem \ref{thm:cone_classification}, the vector space $Y$ is a subspace of $X$, and so it inherits the norm $\norm{\cdot}_u$ from $X$. Thus,
for $y \in Y$,
\[ \norm{y} = \inf \{ \lambda > 0 \colon - \lambda(0,1) \leq (y,0) \leq \lambda(0,1) \}. \]
The condition on $\lambda$ is equivalent with $(y,\lambda), (-y, \lambda) \in C_\Omega$, which in turn is equivalent with $y/ \lambda, -y/ \lambda \in \Ob$,
and so
\[ \norm{y} = \inf \{ \lambda > 0 \colon y \in \lambda(\Ob \cap - \Ob) \}. \]
Hence this norm equals the Minkowski functional of $\Ob \cap -\Ob$.
\end{remark}

The advantage of working  with cones is that we can use $d_H$ instead of $\delta_H$ and apply ideas from the theory of partially ordered vector spaces. Indeed, the following result, which  goes back to Birkhoff \cite{Bi}, is well known, see for example \cite[Theorem 2.12 and Corollary 2.5.6]{LNBook}. 
\begin{lemma} \label{lem:normal}
If $(X,C,u)$ is an order unit space with strictly positive state $\phi$, then on  $\Sigma_\varphi$ the metrics $d_H$ and $\delta_H$ coincide. Moreover, the Hilbert's metric topology on $\Sigma_\varphi$ coincides with
the order unit norm topology on $X$. 
\end{lemma}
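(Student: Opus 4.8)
The argument is classical --- it goes back to Birkhoff --- and I treat the two assertions in turn.

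\emph{Coincidence of $d_H$ and $\delta_H$.} Fix distinct $x,y\in\Sigma_\varphi$ and parametrise the line $\ell_{xy}$ by $p(t)=(1-t)x+ty$, so $p(0)=x$ and $p(1)=y$. The end-point $x'$ of the segment $\ell_{xy}\cap\Sigma_\varphi$ with $x$ between $x'$ and $y$ occurs at a parameter $a<0$, and the other end-point $y'$ at a parameter $b>1$. The proof of Theorem~\ref{thm:cone_classification} already identifies these as $x'=[w_x]$ and $y'=[w_y]$, where $w_x=x-M(y/x)^{-1}y$ and $w_y=y-M(x/y)^{-1}x$ lie in $\partial C\setminus\{0\}$; since $\varphi$ is strictly positive, $\varphi(w_x)=1-M(y/x)^{-1}>0$ and $\varphi(w_y)=1-M(x/y)^{-1}>0$, and normalising $w_x$ and $w_y$ to $\Sigma_\varphi$ expresses $a$ and $b$ explicitly through $M(y/x)^{-1}$ and $M(x/y)^{-1}$. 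A direct substitution into the cross-ratio identity
\[ [x',x,y,y']=\frac{|x'-y|}{|x'-x|}\cdot\frac{|y'-x|}{|y'-y|}=\frac{(1-a)\,b}{(-a)(b-1)} \]
then makes everything cancel except $M(x/y)M(y/x)$, so $\delta_H(x,y)=\log\big(M(x/y)M(y/x)\big)=d_H(x,y)$.

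\emph{Coincidence of the topologies.} Both the subspace order unit norm topology on $\Sigma_\varphi$ and the $d_H$-metric topology are metrisable, so it suffices to show that a sequence $(y_n)\subset\Sigma_\varphi$ converges to $x\in\Sigma_\varphi$ in one topology precisely when it does in the other. The ingredients are that $\norm{\cdot}_u$ is monotone, so $C$ is normal with constant $1$, and that $x\in C^\circ$, so $u\leq_C Mx$ for some $M>0$ by Lemma~\ref{lem:order_units_interior}. For one direction, set $\eps_n:=\norm{x-y_n}_u$ and suppose $\eps_n\to0$; then $-\eps_n u\leq_C x-y_n\leq_C\eps_n u$, and combining with $u\leq_C Mx$ sandwiches $y_n$ between $(1-M\eps_n)x$ and $(1+M\eps_n)x$ for $n$ large, so $M(x/y_n)\leq(1-M\eps_n)^{-1}\to1$ and $M(y_n/x)\leq1+M\eps_n\to1$, whence $d_H(x,y_n)\to0$. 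For the other direction, applying $\varphi$ to $x\leq_C M(x/y_n)y_n$ and $y_n\leq_C M(y_n/x)x$ shows $M(x/y_n),M(y_n/x)\geq1$, so if $d_H(x,y_n)\to0$ then both factors tend to $1$; in particular $\norm{y_n}_u\leq M(y_n/x)\norm{x}_u$ stays bounded, and from $x-y_n\leq_C(M(x/y_n)-1)y_n$ and $y_n-x\leq_C(M(y_n/x)-1)x$ together with monotonicity of $\norm{\cdot}_u$ we get $\norm{x-y_n}_u\to0$.

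\emph{Expected main obstacle.} Nothing here is deep, but the estimates are not uniform over $\Sigma_\varphi$: both the constant $M$ with $u\leq_C Mx$ and the quantity $\norm{y}_u$ blow up as $y$ approaches $\partial\Sigma_\varphi$. The point requiring care is therefore the localisation --- observing that $d_H$-convergence to a fixed interior point $x$ already keeps $\norm{y_n}_u$ bounded, so that the normality estimate runs with a constant depending only on $x$. As in the proof of Theorem~\ref{thm:cone_classification}, the Archimedean property and closedness of $C$ are what ensure that the infima defining $M(x/y)$ are attained and that $w_x,w_y$ genuinely lie in $\partial C\setminus\{0\}$.
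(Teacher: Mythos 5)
Your argument is correct and is essentially the standard one: the paper does not prove this lemma but simply cites Birkhoff and \cite[Theorem 2.12 and Corollary 2.5.6]{LNBook}, and your cross-ratio computation via $w_x$, $w_y$ together with the two-sided sandwich estimates is exactly the argument those references contain. The only cosmetic point is that in the final step $x-y_n$ need not lie in $C$, so rather than invoking monotonicity one should pass to $-(M(y_n/x)-1)\norm{x}_u\,u\leq_C x-y_n\leq_C (M(x/y_n)-1)\norm{y_n}_u\,u$ and apply the definition of $\norm{\cdot}_u$ directly, which is clearly what you intend.
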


If $(X_1,C_1,u_1)$ and $(X_2,C_2,u_2)$ are two order unit spaces and $T\colon
X_1\to X_2$ is  a linear map,  we say that $T$ is {\em bi-positive} if $Tx\in C_2$ if and only if $x\in C_1$. Note that a bi-positive linear map $T\colon X_1\to X_2$  is always  injective, as $Tx = 0$ implies that $x$ and $-x$ in $C_1$, so that $x=0$. Also note that if there exists $x\in C_1^\circ$ such that $Tx\in C^\circ_2$, then $T(C_1^\circ)\subseteq C_2^\circ$. Indeed, if $y\in C^\circ_1$, then $y$ is an order unit by Lemma \ref{lem:order_units_interior}, and hence there exists $\lambda>0$ such that $x\leq_{C_1}\lambda y$. It follows that $Tx\leq_{C_2}\lambda Ty$, so that $Ty$ is an order unit, as $Tx$ is an order unit, and hence $Ty\in C^\circ_2$. 

It is easy to check that a bi-positive linear map $T\colon X_1\to X_2$ induces a Hilbert's metric isometry, as $M(Tx/Ty)=M(x/y)$ for all $x,y\in C_1^\circ$. In that case we  shall denote the induced map between the projective spaces $P(C^\circ_1)$ and $P(C^\circ_2)$ by $[T]$.  

\begin{definition}\label{prohom}
An isometry $f$  from  $(P(C_1^\circ),d_H)$ into $(P(C_2^\circ),d_H)$ is called a {\em projective linear homomorphism} if there exists a bi-positive linear map $T\colon X_1\to X_2$ such that $f=[T]$ on $P(C^\circ_1)$. 
\end{definition}

Before we start the proof of Theorem \ref{thm:infinite_strictly_convex}, we collect some final pieces of notation. Recall that the image of  a map $\gamma$ from a (possibly unbounded) interval $I\subseteq\mathbb{R}$ into $(\Sigma_\varphi, d_H)$ is a {\em geodesic} if 
\[
d_H(\gamma(t),\gamma(s)) = |t-s|\mbox{ for all }t,s\in I.
\]

Given a straight line $\ell_{xy}$ through $x\neq y$ in $\Sigma_\phi$, we write 
 $\ell^+_{xy}:=\ell_{xy}\cap\Sigma_{\varphi}$. Also for  $x,y\in X$ the closed and open line segments are, respectively,  denoted by $$[x,y]:=\{tx+(1-t)y:0\le
t\le 1\}\quad\mbox{and}\quad(x,y):=\{tx+(1-t)y:0<t<1\}.$$ The half-open intervals are 
defined in a similar way. For each $x,y\in \Sigma_\varphi$ the segment $[x,y]$ is a geodesic in $(\Sigma_\varphi,d_H)$, see \cite{Hil}. It is, however, in general not the only geodesic. 
The unique geodesics in $(\Sigma_\varphi,d_H)$ are characterised as follows, see \cite{Hil} or \cite[Proposition 2]{dlH}. 
\begin{lemma}\label{lem:uniq} Let $(X,C,u)$ be an order unit space with strictly positive state $\phi$.  If
$x,y\in \Sigma_\varphi$ and $x',y'\in\partial \Sigma_\varphi$ are the end points of 
$\ell_{xy}\cap \Sigma_\varphi$, then $[x,y]$ is the unique geodesic connecting $x$ and $y$ in
$(\Sigma_\varphi,d_H)$ if and only if there exist no open line segments $I_{x'}$ through $x'$  and $I_{y'}$ through $y'$ in $\partial\Sigma_\varphi$ such that
the affine span of $I_{x'}\cup I_{y'}$ is 2-dimensional. 
\end{lemma}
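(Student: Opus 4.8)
The plan is to characterize when $[x,y]$ is the \emph{unique} geodesic joining $x$ and $y$, by analysing the possible geodesics through a given pair of points. Recall (as recorded just before the lemma) that the straight segment $[x,y]$ is always a geodesic. The strategy is to show that an alternative geodesic exists precisely when one can ``fatten'' the segment into a two-dimensional flat piece near the two boundary endpoints $x'$ and $y'$.

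First I would reduce to the two-dimensional picture: fix $x\neq y$ in $\Sigma_\varphi$ with endpoints $x',y'\in\partial\Sigma_\varphi$ of $\ell_{xy}\cap\Sigma_\varphi$, and parametrise the segment $[x,y]$ as a geodesic $\gamma_0$ with $\gamma_0(a)=x$, $\gamma_0(b)=y$. Suppose $\gamma\colon[a,b]\to(\Sigma_\varphi,d_H)$ is another geodesic from $x$ to $y$. The key input is a computation of $d_H$ in terms of the cross-ratio/Funk-type quantities $M(\cdot/\cdot)$: for points $p,q$ lying on a common segment with boundary endpoints $p',q'$, the distance $d_H(p,q)$ equals $\log[p',p,q,q']$, and the triangle inequality $d_H(x,z)+d_H(z,y)\geq d_H(x,y)$ becomes an equality (a ``flat'' condition) only under a precise geometric constraint on how the supporting directions at the boundary behave. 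I would show that if $z=\gamma(c)$ for $a<c<b$, then equality $d_H(x,z)+d_H(z,y)=d_H(x,y)$ forces the three boundary pairs associated to the segments $[x,z]$, $[z,y]$, $[x,y]$ to be ``aligned'' in the sense that the boundary points seen from $x$, $z$, $y$ all lie on line segments in $\partial\Sigma_\varphi$ emanating from $x'$ and from $y'$. This is essentially the content of De la Harpe's Proposition 2 transported to the order-unit-space setting via Lemma~\ref{lem:normal}, and it is legitimate to invoke the cited references \cite{Hil,dlH} for the two-dimensional cross-ratio identities, since all the action takes place in the plane through $x$, $y$ and any third point $z$ of $\gamma$.

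For the \emph{if} direction, suppose such open segments $I_{x'}\ni x'$ and $I_{y'}\ni y'$ in $\partial\Sigma_\varphi$ exist with $\mathrm{aff}(I_{x'}\cup I_{y'})$ two-dimensional. Then this affine plane meets $\Sigma_\varphi$ in a (relatively open) convex set whose boundary contains the two nontrivial segments $I_{x'}$ and $I_{y'}$; inside such a planar Hilbert geometry one can explicitly build a non-straight geodesic from $x$ to $y$ by going ``out'' toward $I_{x'}$ and back ``in'' along $I_{y'}$ — this is the standard construction of broken geodesics in Hilbert geometries with flat boundary faces, and I would present it as a short explicit piecewise-linear path whose $d_H$-length is additive by the flatness of the two boundary segments. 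For the \emph{only if} direction (contrapositive), if no such pair of segments exists, then the equality-case analysis from the previous paragraph shows any putative third point $z$ of a geodesic must in fact lie on $\ell_{xy}$, hence on $[x,y]$; running this for all $z$ shows $\gamma$ has image contained in $[x,y]$, and since both are geodesics of the same length they coincide.

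The main obstacle I anticipate is the equality-case analysis of the triangle inequality in the \emph{only if} direction: translating ``$d_H(x,z)+d_H(z,y)=d_H(x,y)$'' into the precise statement ``$z\in\ell_{xy}$, or else flat boundary segments through $x'$ and $y'$ exist'' requires care about supporting hyperplanes of $C$ at the boundary points and a clean two-dimensional cross-ratio computation. In infinite dimensions one must be slightly careful that the relevant convexity and closedness arguments only use the plane $\mathrm{aff}(\{x,y,z\})$ and the normality/closedness of $C$ from Lemma~\ref{lem:order_units_interior} and the surrounding discussion; once restricted to that plane the argument is finite-dimensional and matches \cite{dlH}. I would therefore structure the proof to isolate this planar lemma first, cite \cite{Hil,dlH} for the cross-ratio identities, and then assemble the two directions as above.
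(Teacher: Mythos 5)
The first thing to note is that the paper does not prove Lemma~\ref{lem:uniq} at all: it is quoted with a pointer to \cite{Hil} and \cite[Proposition~2]{dlH}, so there is no in-house argument to compare yours against. Your skeleton --- reduce to the planar convex body cut out by the plane through $x$, $y$ and a candidate third point $z$; build a broken geodesic when coplanar boundary segments exist; analyse the equality case of the triangle inequality when they do not --- is the standard one from those sources, and the observation that all the action takes place in $\mathrm{aff}\{x,y,z\}$ is exactly the right reason the finite-dimensional statement transfers to the order unit space setting. In that sense your plan is sound and already more explicit than what the paper offers.

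As a proof, however, it is incomplete in precisely the place you flag: the equality-case analysis is the entire content of the lemma, and you describe it without carrying it out; moreover, invoking \cite[Proposition~2]{dlH} for that step is close to circular, since Lemma~\ref{lem:uniq} essentially \emph{is} that proposition. In the cone language the step closes cleanly and dimension-free: $d_H(x,z)+d_H(z,y)=d_H(x,y)$ forces $M(x/z)M(z/y)=M(x/y)$ and $M(y/z)M(z/x)=M(y/x)$, since each product dominates the corresponding single term by submultiplicativity of $M$. Putting $w_1:=M(x/z)z-x$ and $w_2:=M(z/y)y-z$, both in $\partial C$, one has $w_1+M(x/z)w_2=M(x/z)M(z/y)y-x$, and since $\beta y-x\in C^\circ$ for every $\beta>M(x/y)$, the first identity is equivalent to $w_1+M(x/z)w_2\in\partial C$, hence (a strictly positive combination of two boundary rays of a closed cone lies in $\partial C$ only if the whole projective segment between them does) to $[[w_1],[w_2]]\subset\partial\Sigma_\varphi$. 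Crucially, the endpoint of $\ell_{xy}\cap\Sigma_\varphi$ beyond $y$ equals $[M(x/y)y-x]$, a \emph{strictly} positive combination of $w_1$ and $w_2$, so it lies in the relative interior of that boundary segment: the equality case yields an open segment \emph{through} $y'$, and symmetrically one through $x'$, both inside $\mathrm{aff}\{x,y,z\}$. This is where your wording ``segments \ldots emanating from $x'$ and from $y'$'' is too weak: segments merely having $x'$ and $y'$ as endpoints do not verify the condition in the lemma, and the equivalence would not follow from producing only those. For the ``if'' direction you should also record that an open boundary segment through $x'$ lies on a supporting line of $\Sigma_\varphi\cap\mathrm{aff}\{x,y,z\}$ at $x'$ and therefore meets both sides of $\ell_{xy}$ in that plane; this is what lets you choose $z$ so that both $M$-identities above hold simultaneously, giving the non-straight geodesic.
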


\section{Strictly convex Hilbert geometries}
We prove Theorem \ref{thm:infinite_strictly_convex}. Throughout this section we shall
assume that $(X_1, C_1, u_1)$ and $(X_2, C_2, u_2)$ are order unit spaces,  with strictly positive states $\varphi_i \colon
X_i\to\mathbb{R}$ for $i=1,2$. For simplicity  we write 
\[
\Sigma_i :=\{x\in C_i^\circ\colon \varphi_i(x) =1\}  \quad \mbox{and} \quad \norm{\cdot}_i := \norm{\cdot}_{u_i} \mbox{\quad for $i=1,2$. }
\]

Recall that $\Sigma_i$, $i=1,2$, is {\em strictly convex} if for each $x,y\in\partial\Sigma_i$ we have that $(x,y)\subseteq \Sigma_i$. In that case, it follows from
Lemma \ref{lem:uniq} that the metric spaces $(\Sigma_i,d_H)$, $i=1,2$, are uniquely geodesic. 
Thus, any isometry $f$ of $(\Sigma_1,d_H)$ into $(\Sigma_2,d_H)$ has to map line segments to line segments if $(\Sigma_1,d_H)$  and $(\Sigma_2,d_H)$  are
strictly convex. We shall show that any isometry that maps line segments to line segments must be a projective linear homomorphism, which implies Theorem
\ref{thm:infinite_strictly_convex}. We begin with the following lemma, which generalises \cite[p.101]{dlH}.

\begin{lemma}\label{isometry in strictly convex cone} If $f\colon(\Sigma_{1},d_H)\to(\Sigma_2,d_H)$ is an isometry that maps line segments to line segments, then
$f$ has a unique continuous extension to a map from $(\overline{\Sigma}_{1},\|\cdot\|_1)$ into $(\overline{\Sigma}_{2},\|\cdot\|_2)$. Furthermore, this
extension is injective.
\end{lemma}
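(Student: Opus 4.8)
The plan is to show that $f$ extends continuously to $\overline{\Sigma}_1$ by exploiting the geometric rigidity coming from "maps line segments to line segments'', together with the metric estimates relating $d_H$ and $\|\cdot\|_1$ from Lemma~\ref{lem:normal}. First I would observe that since $f$ carries line segments to line segments, for any $x \neq y$ in $\Sigma_1$ the image of the geodesic line $\ell^+_{xy}$ is contained in a geodesic line $\ell^+_{f(x)f(y)}$ in $\Sigma_2$. An isometric embedding of a full (doubly infinite) geodesic line of $(\Sigma_1,d_H)$ — which $\ell^+_{xy}$ is, being affinely isomorphic to an open interval with the cross-ratio metric — maps onto a full geodesic line of $(\Sigma_2,d_H)$, because a complete geodesic line has no isometric proper extension inside a metric line. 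Hence $f$ maps the open segment $\ell^+_{xy}$ \emph{onto} the open segment $\ell^+_{f(x)f(y)}$, and the natural parametrisations match up to the obvious affine reparametrisation. This lets me define the extension at a boundary point $x' \in \partial\Sigma_1$ which is an endpoint of $\ell_{xy}\cap\Sigma_1$: set $\hat f(x')$ to be the corresponding endpoint of $\ell_{f(x)f(y)}\cap\Sigma_2$ in $\overline{\Sigma}_2$.

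The first real task is to check this is \emph{well defined}: a boundary point $x'$ may be an endpoint of many line segments $\ell^+_{xy}$. If $x',y'$ are the two endpoints of $\ell^+_{xy}$, and we pick another chord $\ell^+_{uv}$ also having $x'$ as an endpoint, I would compare the two candidate values using a point $w$ interior to both chords or, more robustly, using the fact that $x$ is determined as a limit along the chord and $f$ is an isometry: the value $\hat f(x')$ is the limit in $(\Sigma_2,d_H)$-sense (equivalently, in $\|\cdot\|_2$ by Lemma~\ref{lem:normal}, since on $\overline{\Sigma}_2$ the two topologies still relate appropriately near the boundary) of $f(z)$ as $z \to x'$ along \emph{any} chord ending at $x'$. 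Concretely, if $z_n \to x'$ and $z_n' \to x'$ along two different chords, then $d_H(z_n,z_n')$ need \emph{not} go to zero, so I cannot argue purely metrically; instead I use the affine/projective structure: the segment $[z_n, z_n']$ lies in $\Sigma_1$, its image is a segment in $\Sigma_2$ with endpoints $f(z_n), f(z_n')$, and as $n\to\infty$ these segments' images, being on chords converging to a common boundary point, force $f(z_n)$ and $f(z_n')$ to have the same $\|\cdot\|_2$-limit. This is the step I expect to be the \emph{main obstacle} — reconciling the purely metric behaviour of $f$ (which degenerates at the boundary) with the affine bookkeeping of which endpoint goes where — and I would handle it by working with a fixed basepoint $p \in \Sigma_1$, the chord $\ell^+_{px'}$, and the monotone correspondence of parameters, showing every other chord to $x'$ induces the same boundary value by an elementary planar (2-dimensional affine) argument inside the plane spanned by $p$, $x'$ and the other chord.

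Once $\hat f$ is well defined on $\overline{\Sigma}_1$, continuity follows: given $x \in \overline{\Sigma}_1$ and $x_n \to x$ in $\|\cdot\|_1$, I fix a basepoint $p \in \Sigma_1$, look at the chords $\ell_{px_n}$, whose endpoints converge to those of $\ell_{px}$ (using closedness of $C_1$ and normality), and since $\hat f$ sends these to the corresponding chords through $f(p)$, whose endpoints likewise vary continuously, we get $\hat f(x_n) \to \hat f(x)$. Uniqueness of the extension is immediate because $\Sigma_1$ is dense in $\overline{\Sigma}_1$ and a continuous map on a metric space is determined by its values on a dense set. For injectivity of $\hat f$: suppose $\hat f(x) = \hat f(y)$ with $x \neq y$ in $\overline{\Sigma}_1$; if both are interior points this contradicts $f$ being an isometry; if at least one is a boundary point, I choose an interior point $p$ on the line through $x$ and $y$ (or near it) and examine the chord $\ell^+_{xy}$ (or a chord through $p$ having $x$, respectively $y$, as related endpoints) — $\hat f$ maps the corresponding open chord in $\Sigma_1$ injectively (it is an isometry there) onto an open chord in $\Sigma_2$ with \emph{distinct} endpoints, so $x$ and $y$, being distinguished as the two ends (or an end versus an interior point) of that chord, cannot have the same image. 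The key point throughout is that the "line segments to line segments'' hypothesis upgrades the metric isometry to a map respecting the full affine boundary structure, and the boundary extension is then read off from the endpoints of chords.
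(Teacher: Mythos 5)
Your overall plan (extend $f$ to $\partial\Sigma_1$ via endpoints of image chords, then check well-definedness, continuity and injectivity) is reasonable, and your observation that $f$ maps each chord \emph{onto} the corresponding image chord is correct. But the step you yourself single out as ``the main obstacle'' --- that two chords of $\Sigma_1$ ending at the same boundary point $x'$ have image chords ending at the same boundary point --- is exactly the mathematical content of the lemma, and your proposal does not actually prove it. The mechanism you suggest (look at the segments $[z_n,z_n']$ joining points $z_n\to x'$ on one chord to points $z_n'\to x'$ on the other, and claim their images ``force $f(z_n)$ and $f(z_n')$ to have the same limit'') does not work as stated: one can arrange $d_H(z_n,z_n')$ to stay bounded away from both $0$ and $\infty$, and if the two image limits $\alpha=\lim f(z_n)$ and $\beta=\lim f(z_n')$ were distinct, the boundedness of the cross-ratios only forces the segment $[\alpha,\beta]$ to sit inside a line segment contained in $\partial\Sigma_2$ --- which is not a contradiction, since the lemma (and its proof) must work without assuming $\Sigma_2$ strictly convex. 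A planar argument that does close this gap exists (e.g.\ fix $q$ on the second chord, note that the chords $\ell^+_{qz_n}$ converge as lines to the second chord, so by continuity of $f$ at interior points their images accumulate on all of $f(\ell^+_{qx'})$ while also converging to the line through $f(q)$ and $\alpha$, forcing $\alpha=\beta$), but it is a genuine argument, not the one you sketched, and you would still need the continuity of the ``endpoint of the chord in a given direction'' map (the Minkowski gauge), which your phrase ``closedness of $C_1$ and normality'' does not supply.

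Two further remarks. First, your continuity argument only treats $\hat f(x_n)$ as the \emph{endpoint} of an image chord; for a sequence of \emph{interior} points $x_n\in\Sigma_1$ converging to $x\in\partial\Sigma_1$ you must also show that $f(x_n)$, which sits at some interior position on a varying image chord, actually converges to the endpoint $\hat f(x)$ --- this needs a separate compactness/parameter estimate. Second, the paper's proof sidesteps the well-definedness issue entirely: it defines $\hat f(x')$ using only the single chord $[p,x')$ from a fixed basepoint $p$ (so there is nothing to check), and then proves continuity directly by writing $f(x_n)=s_nf(y_n)+(1-s_n)f(p)$ with $y_n=(p+x_n)/2\in\Sigma_1$, bounding the parameters $s_n$ by choosing $s>1$ with $sf(y)+(1-s)f(p)\notin\overline{\Sigma}_2$, and extracting convergent subsequences; independence of the basepoint and of the approaching chord then comes for free from continuity. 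I would strongly recommend this reordering: proving continuity first makes the well-definedness you are struggling with an immediate corollary rather than a prerequisite.
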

\begin{proof} 
The uniqueness of the continuous extension follows from the density of $\Sigma_1$ in $\overline{\Sigma}_1$. Convergence in this proof will always be in the order unit norm; recall
that on $\Sigma_i$, the order unit norm topology  and $d_H$-topology coincide by Lemma~\ref{lem:normal}.

Fix an element $p\in\Sigma_1$ and let $x\in\partial\Sigma_1$. The line segment $[p,x)$ is mapped onto the line
segment $[f(p ),\xi)$, for some $\xi\in\partial\Sigma_2$, because  $$d_H(p,(1-t)p+tx)\to\infty$$ as $t\uparrow 1$. We define $f(x):=\xi$.  

To show continuity of the extension, let $(x_n)_n$ be a sequence in $\overline{\Sigma}_{1}$ converging to $x$. Then $y_n := (p+x_n)/2 \to (p+x)/2 =: y$, and
so $f(y_n) \to f(y)$. Let $s > 1$ be such that $sf(y) + (1-s)f(p) \notin \overline{\Sigma}_2$. Then for some $N \geq 1$ and all $n \geq N$,
\begin{equation}\label{eq:outside_sigma}
 sf(y_n) + (1-s) f(p) \notin \overline{\Sigma}_2.
\end{equation}
Since $f$ maps line segments to line segments, there exist $s_n$ such that 
\[ s_n f(y_n) + (1-s_n) f(p) = f(x_n) \in \overline{\Sigma}_2, \]
and combining this with \eqref{eq:outside_sigma} yields $s_n < s$ for all $n \geq N$.

Now suppose that $f(x_n)$ does not converge to $f(x)$. By passing to a subsequence three times, we find a subsequence $(x_{n_k})_k$ such that $f(x_{n_k})$
stays away from $f(x)$,  $s_{n_k} \to r \in [0,s]$, and that either all $x_{n_k} \in \partial \Sigma_1$ or all $x_{n_k} \in \Sigma_1$. It follows that
\[ f(x_{n_k}) = s_{n_k} f(y_{n_k}) + (1-s_{n_k}) f(p) \to r f(y) + (1-r) f(p) \in \overline{\Sigma}_2. \]
We claim that $rf(y)+(1-r)f(p) \in \partial\Sigma_2$. Indeed, if all $x_{n_k} \in \partial \Sigma_1$, then $f(x_{n_k}) \in \partial \Sigma_2$ and the claim
follows from the closedness of $\partial \Sigma_2$. If all $x_{n_k} \in \Sigma_1$, then the fact $f$ is a $d_H$-isometry combined with $d_H(x_{n_k},p) \to
\infty$ yields $d_H(f(x_{n_k}), f(p)) \to \infty$, and so $rf(y) + (1-r)f(p) \in \partial \Sigma_2$. 

Hence $f(x) = rf(y) + (1-r)f(p)$ by construction of $f(x)$, and so $f(x_{n_k}) \to f(x)$, which is impossible since $f(x_{n_k})$ stays away
from $f(x)$. Therefore the extension of $f$ is continuous.
 
To show  injectivity suppose that $x,y\in\overline{\Sigma}_1$ are such that $x\neq y$. Note that 
$[f(p),f(x)]\neq[f(p),f(y)]$, since $f$ is injective on $\Sigma_1$; so,  $f(x)\neq f(y)$.
\end{proof}

The following lemma is essentially part (iii) of the lemma on page 101 in \cite{dlH}, and will be useful in the sequel.

\begin{lemma}\label{two-dimensional base change}
Let $x,y\in \Sigma_1$ and let $x',y'\in\partial \Sigma_1$ be the end points of $\ell_{xy}^+$ such that $x$ is between $x'$ and $y$, and $y$ is between $y'$ and
$x$. 
Suppose that $f\colon(\Sigma_1,d_H)\to(\Sigma_2,d_H)$ is an isometry that maps line segments to line segments. If $f(x)'$ and $f(y)'$ are the end points of
$\ell_{f(x)f(y)}^+$ such that $f(x)$ is between $f(x)'$ and $f(y)$, and $f(y)$ is between $f(y)'$ and $f(x)$, then  there exists a 
linear map $S\colon\mathrm{Span}\{x',y'\}\to\mathrm{Span}\{f(x)',f(y)'\}$ satisfying $[Sx']=f(x)'$, $[Sy']=f(y)'$, $[Sz]=f(z)$ for all $z\in (x',y')$, and $S$ is bi-positive with respect to  $C_1\cap\mathrm{Span}\{x',y'\}$ and $C_2\cap\mathrm{Span}\{f(x)',f(y)'\}$.
\end{lemma}
\begin{proof} Let $0<s,t<1$ be such that $x=tx'+(1-t)y'$ and $f(x)=sf(x')+(1-s)f(y')$. Since $\{tx',(1-t)y'\}$ and $\{sf(x'),(1-s)f(y')\}$ define bases for
$\mathrm{Span}\{x',y'\}$ and $\mathrm{Span}\{f(x)',f(y)'\}$, and the cones $C_1\cap\mathrm{Span}\{x',y'\}$ and $C_2\cap\mathrm{Span}\{f(x)',f(y)'\}$ are the
positive span of these basis elements, we have a bijective and bi-positive linear map $S\colon \mathrm{Span}\{x',y'\}\to\mathrm{Span}\{f(x)',f(y)'\}$ defined
by 
\[
S(\alpha tx'+\beta (1-t)y'):=\alpha sf(x)'+\beta (1-s)f(y)'.
\] 
Note that $[Sx']=f(x)'$, $[Sy']=f(y)'$ and $Sx = f(x)$. 

Let $z\in [x,x')$ and note that, as $f$ maps line segments to line segments and  $f$ is an isometry, we have that
\[
[f(x)',f(z),f(x),f(y)']=\exp(d_H(f(z),f(x)))=\exp(d_H(z,x))=[x',z,x,y'].
\]

As the cross ratio is a projective invariant, we know that  
\[
\left[[Sx'],[Sz],[Sx],[Sy']\right]= [Sx',Sx,Sy,Sy']=[x',z,x,y'], 
\] 
which combined with the previous equality gives 
 \[
\left[[Sx'],[Sz],[Sx],[Sy']\right]=[f(x)',f(z),f(x),f(y)']= \left[[Sx'],f(z),[Sx],[Sy']\right].
\] 
This implies that $[Sz]=f(z)$. Interchanging the roles of $x'$ and $y'$ finally gives $[Sz]=f(z)$ for all  $z\in (x',y')$.
\end{proof}

\begin{proposition}\label{extension of collineation}Let $f\colon(\Sigma_1,d_H)\to(\Sigma_2,d_H)$ be an isometry that maps line segments to line segments and
$Y\subseteq X_1$ be a subspace such that $Y\cap\Sigma_1\neq \emptyset$. If $T\colon Y\to X_2$ is a bi-positive  linear map
such that $[Ty]=f(y)$ for all $y\in Y\cap\Sigma_1$, then for $z\in\Sigma_1\setminus Y$ there is a bi-positive linear extension $$\hat{T}\colon
Y\oplus\mathrm{Span}\{z\}\to X_2$$ of $T$ such that $[\hat{T}y]=f(y)$ for all $y\in(Y\oplus \mathrm{Span}\{z\})\cap\Sigma_1$.
\end{proposition}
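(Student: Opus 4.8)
The plan is to build $\hat T$ by specifying where it sends $z$ and then extending linearly, using Lemma~\ref{two-dimensional base change} to pin down the correct image and to verify bi-positivity. Pick a base point $p \in Y \cap \Sigma_1$. The line $\ell_{pz}$ meets $\partial\Sigma_1$ in two endpoints $p',z'$ (with the usual betweenness convention), and by Lemma~\ref{two-dimensional base change} applied to the pair $p,z$ there is a bi-positive linear map $S$ on $\mathrm{Span}\{p',z'\}$ with $[Sp']=f(p)'$, $[Sz']=f(z)'$, and $[Sw]=f(w)$ for all $w\in(p',z')$. Since $p\in Y$, both $p$ and $p'$ already have prescribed images under $T$ (indeed $[Tp]=f(p)$, and because $p'\in\partial\Sigma_1$ is a limit of points of $[p,z)$ which $f$ sends into $\ell_{f(p)f(z)}^+$, the ray $T(\mathbb{R}_{\geq 0}p')$ is forced to be the ray through $f(p)'$). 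Now define $\hat T$ on $Y\oplus\mathrm{Span}\{z\}$ by $\hat T|_Y = T$ and by declaring $\hat T z$ to be the unique vector making $\hat T$ agree with (a suitable positive rescaling of) $S$ on $\mathrm{Span}\{p',z'\}$; concretely, write $z = tp' + (1-t)z'$ for some $0<t<1$, and set $\hat T z := \frac{1}{1-t}\big(Sz - t\,(Sp')\big)$ after rescaling $S$ so that $S p'$ is the actual vector $Tp'$ rather than merely a positive multiple of it. One checks this is well-defined as a linear map on the direct sum.

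Next I would verify the two required properties. For the interpolation property $[\hat T y]=f(y)$ on $(Y\oplus\mathrm{Span}\{z\})\cap\Sigma_1$: any such $y$ lies on a line through some point of $Y\cap\Sigma_1$ and, after chasing, on a two-dimensional configuration to which Lemma~\ref{two-dimensional base change} applies; the cross-ratio invariance argument in that lemma, together with the fact that $\hat T$ already realises $f$ on $Y\cap\Sigma_1$ and on $[p,z)$, forces $[\hat T y]=f(y)$ by the same uniqueness-of-the-fourth-point computation used there. Injectivity and the fact that $\hat T$ restricted to the relevant spans is bi-positive follow because $S$ is bi-positive and $T$ is bi-positive, and the cones in question are the positive spans of the basis rays. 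For global bi-positivity of $\hat T\colon Y\oplus\mathrm{Span}\{z\}\to X_2$: since $\hat T$ sends the interior point $p$ of $C_1\cap(Y\oplus\mathrm{Span}\{z\})$ into $C_2^\circ$, by the remark preceding the proposition it suffices to check $\hat T^{-1}(C_2)\subseteq C_1$ and $\hat T(C_1\cap(Y\oplus\mathrm{Span}\{z\}))\subseteq C_2$; I would reduce each of these to a statement about points of $\Sigma_1$ (scaling by $\varphi_1$), and there use that $[\hat T y]=f(y)\in\Sigma_2$ together with continuity of $f$ up to $\overline\Sigma_1$ from Lemma~\ref{isometry in strictly convex cone} to handle boundary points.

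The main obstacle I expect is consistency: showing that the vector $\hat T z$ produced from the two-dimensional slice $\mathrm{Span}\{p,z\}$ does not depend on the choice of base point $p\in Y\cap\Sigma_1$, equivalently that $\hat T$ is a well-defined linear map whose restriction to \emph{every} two-plane through $z$ and a point of $Y\cap\Sigma_1$ agrees with the corresponding $S$. The resolution is that two such slices share the line through $z$ and the (convex, hence connected) set $Y\cap\Sigma_1$ contains a segment between any two of its points, so the images must be compatible by the uniqueness built into Lemma~\ref{two-dimensional base change}; making this gluing argument precise, and checking that bi-positivity survives it, is where the real work lies. A secondary subtlety is the correct normalisation of $S$ (it is only determined up to positive scalars on each ray), which must be fixed using the already-known value $Tp'$ so that $\hat T$ genuinely extends $T$.
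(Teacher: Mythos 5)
Your skeleton is the same as the paper's: fix a base point $p\in Y\cap\Sigma_1$, apply Lemma~\ref{two-dimensional base change} on the plane $\mathrm{Span}\{p,z\}$ to get a bi-positive $S$ realising $f$ projectively on $\ell^+_{z,p}$, normalise $S$ against $T$, and set $\hat{T}(y+\lambda z):=Ty+\lambda Sz$. But there are two problems. A minor one: you normalise by demanding $Sp'=Tp'$, yet $p'\notin Y$ (the line through $p$ and $z$ meets the subspace $Y$ only at $p$, since $z\notin Y$), so $Tp'$ is undefined; the correct normalisation is $Sp=Tp$, both points being in the domain of both maps. You also still owe an argument that $\mathrm{ran}\,T+\mathrm{Span}\{Sz\}$ is a direct sum (the paper deduces this from bi-positivity of $T$ and injectivity of $f$).

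The central gap is your verification of $[\hat{T}w]=f(w)$ for $w\in(Y\oplus\mathrm{Span}\{z\})\cap\Sigma_1$ lying off $Y\cup\ell^+_{z,p}$. A line through $w$ meets $Y$ in at most one point and $\ell^+_{z,p}$ in at most one point, so on any single transversal you control the $\hat{T}$-images, up to positive scalars, of only \emph{two} points; two points do not determine a projective map of a line. Concretely, if $w=s\eta+(1-s)y_1$ with $\eta\in\ell^+_{z,p}$ and $y_1\in Y\cap\Sigma_1$, then $\hat{T}w=s\varphi_2(\hat{T}\eta)f(\eta)+(1-s)\varphi_2(\hat{T}y_1)f(y_1)$, whose projection can sit anywhere on $(f(\eta),f(y_1))$ depending on the uncontrolled ratio $\varphi_2(\hat{T}\eta)/\varphi_2(\hat{T}y_1)$; so the ``uniqueness-of-the-fourth-point'' computation is not available, as you never know a third point. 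The missing idea---which is also what settles the consistency issue you rightly flag as the real work---is to use \emph{two} transversals: choose distinct $\eta_1,\eta_2\in\ell^+_{z,p}$ on the opposite side of the hyperplane $Y$ from $w$, let $y_i$ be the intersection of $\ell^+_{w,\eta_i}$ with $Y\cap\Sigma_1$, and note that $\hat{T}w\in\mathrm{Span}\{f(\eta_1),f(y_1)\}\cap\mathrm{Span}\{f(\eta_2),f(y_2)\}=\mathrm{Span}\{f(w)\}$, the last equality holding because $f$ is injective and maps segments to segments, so $f(w)$ is the unique intersection point of the two image segments. Convexity of $(Y\oplus\mathrm{Span}\{z\})\cap C_1^\circ$ together with $\hat{T}p\in C_2^\circ$ then fixes the sign of the scalar. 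Your bi-positivity sketch (push $\partial C_1$ to $\partial C_2$ via the continuous extension of $f$ to $\overline{\Sigma}_1$, working in finite-dimensional slices) matches the paper and is fine once the interpolation property is established.
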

\begin{proof} Let $T\colon Y\to X_2$ be a bi-positive  linear map that satisfies $[Ty]=f(y)$ for all $y\in Y\cap\Sigma_1$ and choose
$z\in\Sigma_1\setminus Y$. Fix $\xi\in Y\cap\Sigma_1$ and consider $\ell^+_{z\xi}$. By Lemma \ref{two-dimensional base change} there exists a bi-positive
 linear map $S\colon\mathrm{Span}\{\xi, z\}\to X_2$ such that  $[Sx]=f(x)$ for all $x\in\ell^+_{z\xi}$.  By rescaling $S$, we may assume that $S\xi=T\xi$. 

Now let $\hat{Y}:= Y\oplus\mathrm{Span}\{z\}$ and define the linear map 
\[
\hat{T}\colon \hat{Y}\to\mathrm{ran\,}T+\mathrm{Span}\{Sz\}
\] 
by $y+\lambda z\mapsto Ty+\lambda Sz$. We wish to show that $\mathrm{ran\,}T+\mathrm{Span}\{Sz\}$ is in fact a direct sum, making $\hat{T}$ injective. 
Suppose $Ty=\lambda Sz$ for some $y\in Y$ and $\lambda>0$. As $T$ is bi-positive, this implies that $y\in Y\cap C_1^\circ$, so $[Ty]=[Sz]=f(z)$. But this yields
$f([y])=f(z)$, so $[y]=z$ and this is impossible. If $\lambda<0$, we can use a similar argument for $-y$ to arrive at a contradiction. Thus,
$\mathrm{ran\,}T+\mathrm{Span}\{Sz\}$ is indeed a direct sum. Since $\hat{T}\xi=T\xi=S\xi$ and $\hat{T}z=Sz$, it follows that $\hat{T}=S$ on $\ell^+_{z\xi}$
and so $[\hat{T}x]=f(x)$ for all $x\in\ell^+_{z\xi}$. 

Now suppose that $w\in\hat{Y}\cap\Sigma_1$ and  $w\notin Y\cup\ell^+_{z\xi}$. The
subspace $Y$ is a hyperplane in $\hat{Y}$ and therefore, it divides $\hat{Y}\cap\Sigma_1$ into two parts. Choose
distinct $\eta_1,\eta_2\in\ell^+_{z\xi}$ that lie on the other side of $w$ and let  $y_1,y_2\in Y\cap\Sigma_1$ be the intersection points of the line segments
$\ell^+_{w\eta_1}$ and $\ell^+_{w\eta_2}$, respectively. The situation is depicted in Figure \ref{collinear extension picture} below.
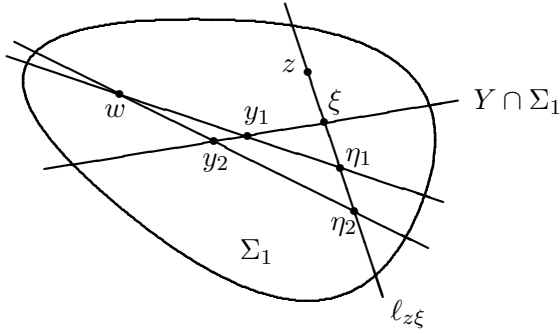
\begin{figure}[h]
\begin{center}
\thicklines
\setlength{\unitlength}{1mm}
\begin{picture}(60,40)
\closecurve(10,15, 20,40, 50,15)
\put(0,20){\line(6,1){55}}
\put(57,28){$Y\cap\Sigma_1$}
\put(32,42){\line(1,-3){13}}
\put(46,0){$\ell_{z\xi}$}
\put(-4,37){\line(2,-1){55}}
\put(-5,35){\line(3,-1){58}}
\put(10,30){\circle*{1.0}}\put(8,27){$w$}
\put(22.5,23.8){\circle*{1.0}}\put(21,20.5){$y_2$}
\put(27,24.5){\circle*{1.0}}\put(26.5,26.7){$y_1$}
\put(39.3,20.2){\circle*{1.0}}\put(40,21.2){$\eta_1$}
\put(41.2,14.4){\circle*{1.0}}\put(38,12){$\eta_2$}
\put(37.2,26.3){\circle*{1.0}}\put(37.7,28){$\xi$}
\put(35,33){\circle*{1.0}}\put(31.5,33){$z$}
\put(26,8){$\Sigma_1$}
\end{picture}
\caption{Points of intersection\label{collinear extension picture}}
\end{center}
\end{figure}
We see that $w$ is the unique point of intersection of the line segments $\ell^+_{\eta_1y_1}$ and $\ell^+_{\eta_2y_2}$. As $f$ is injective, it follows that
$f(w)$ is the unique point of intersection of the line segments $\ell^+_{f(\eta_1)f(y_1)}$ and $\ell^+_{f(\eta_2)f(y_2)}$. Also, as
$[\hat{T}\eta_i]=f(\eta_i)$ and $[\hat{T}y_i]=f(y_i)$, we have $\hat{T}\eta_i,\hat{T}y_i\in\mathrm{Span}\{f(\eta_i),f(y_i)\}$; hence
\[
\hat{T}w\in\mathrm{Span}\{f(\eta_1),f(y_1)\}\cap\mathrm{Span}\{f(\eta_2),f(y_2)\}=\mathrm{Span}\{f(w)\}.
\] 
So, $\hat{T}w\in C_2^\circ\cup- C_2^\circ$, since $\hat{T}$ is injective, and hence 
\[
\hat{T}(\hat{Y}\cap C_1^\circ)\subseteq C_2^\circ\cup-C_2^\circ.
\] 
The convexity of $\hat{Y}\cap C_1^\circ$ now implies that either $\hat{T}(\hat{Y}\cap C_1^\circ)\subseteq C_2^\circ$ or  $\hat{T}(\hat{Y}\cap C_1^\circ)\subseteq -C_2^\circ$. As $\hat{T}\xi=T\xi\in C_2^\circ$, it follows that
$\hat{T}(\hat{Y}\cap C_1^\circ)\subseteq C_2^\circ$. Moreover, this shows that $[\hat{T}w]$ is well defined. As $w$ was arbitrary, we conclude that
$[\hat{T}y]=f(y)$ for all $y\in\hat{Y}\cap\Sigma_1$.

Next, we will show that $\hat{T}$ is bi-positive. Suppose $x\in\partial C_1\cap \hat{Y}$ with $x\neq 0$. We claim that $\hat{T}(x)\in\partial C_2$. Since
$\varphi_1(x)>0$, we may assume without loss of generality that $x\in\partial\Sigma_1$. Define $x_n:=(1-\frac{1}{n})x+\frac{1}{n}\xi$ for $n\ge 1$. Then we have
$x_n\to x$, so that $f(x_n)\to f(x)$ by Lemma \ref{isometry in strictly convex cone} and
$$\hat{T}x_n=\hat{T}|_{\mathrm{Span}\{x,\xi\}}x_n\to\hat{T}|_{\mathrm{Span}\{x,\xi\}}x=\hat{T}x$$ as $\dim(\mathrm{Span}\{x,\xi\})<\infty$. It follows that
$\varphi_2(\hat{T}x_n)\to\varphi_2(\hat{T}x)$ and, since $\hat{T}x_n\in C_2$ for all $n\ge 1$, we must have $\hat{T}x\in C_2$, because $C_2$ is closed. Moreover,
the injectivity of $\hat{T}$ yields $\varphi_2(\hat{T}x)>0$. We also have that 
$$\hat{T}x_n=\varphi_2(\hat{T}x_n)f(x_n)\to\varphi_2(\hat{T}x)f(x)\in\partial C_2,$$ so we conclude that $\hat{T}x\in\partial C_2$ showing that $\hat{T}$ is
positive. Finally, if we pick $x\in \hat{Y}\setminus C_1$, then there exists a $0<t<1$ such that $tx+(1-t)\xi\in\partial C_1$ and so
$t\hat{T}x+(1-t)\hat{T}\xi\in\partial C_2$ by our previous findings. But this implies that $\hat{T}x\notin C_2$ making $\hat{T}$ bi-positive.
\end{proof}

Enough preparations have been made to prove the following result which implies Theorem 
\ref{thm:infinite_strictly_convex} by Theorem \ref{thm:cone_classification}.
\begin{theorem}\label{isometry on hilbert is collinear} An map $f\colon(\Sigma_1,d_H)\to(\Sigma_2,d_H)$  is an isometry that maps line segments to line segments if and only if
it is a projective linear homomorphism. 
\end{theorem}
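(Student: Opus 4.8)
The reverse implication I would dispose of immediately: if $f=[T]$ for a bi-positive linear map $T\colon X_1\to X_2$, then, as observed earlier in the paper, $T$ induces a Hilbert's metric isometry because $M(Tx/Ty)=M(x/y)$ for all $x,y\in C_1^\circ$; and $f$ maps line segments to line segments, since $[T]$ carries a segment $[a,b]\subseteq\Sigma_1$ onto a reparametrisation of the projectivisation of the segment $[Ta,Tb]$. So the real task is the forward implication, and the plan there is to exhaust $X_1$ using Proposition~\ref{extension of collineation} together with Zorn's Lemma.

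Assume $f\colon(\Sigma_1,d_H)\to(\Sigma_2,d_H)$ is an isometry that maps line segments to line segments. I would introduce the set $\mathcal P$ of pairs $(Y,T)$ in which $Y\subseteq X_1$ is a linear subspace with $Y\cap\Sigma_1\neq\emptyset$ and $T\colon Y\to X_2$ is a bi-positive linear map satisfying $[Ty]=f(y)$ for all $y\in Y\cap\Sigma_1$, ordered by $(Y,T)\preceq(Y',T')$ if $Y\subseteq Y'$ and $T'|_Y=T$. First I would check that $\mathcal P$ is nonempty: fixing $p\in\Sigma_1$ and putting $Y_0:=\mathrm{Span}\{p\}$, $T_0(\lambda p):=\lambda f(p)$, one has $C_1\cap Y_0=\mathbb R_{\ge0}\,p$ and $C_2\cap\mathrm{Span}\{f(p)\}=\mathbb R_{\ge0}\,f(p)$, so $T_0$ is bi-positive, and $[T_0p]=f(p)$. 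Next, every chain $\{(Y_\alpha,T_\alpha)\}$ in $\mathcal P$ has an upper bound: set $Y:=\bigcup_\alpha Y_\alpha$ and $T:=\bigcup_\alpha T_\alpha$. This $T$ is a well-defined linear map because the chain is totally ordered; it is bi-positive because for any $x\in Y$ the question of whether $x\in C_1$ — equivalently, whether $Tx\in C_2$ — is already settled inside some $Y_\alpha$; and $[Ty]=f(y)$ for $y\in Y\cap\Sigma_1$ holds for the same reason. Zorn's Lemma then provides a maximal element $(Y,T)\in\mathcal P$.

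The remaining step is to show $Y=X_1$. If there were a point $z\in\Sigma_1\setminus Y$, Proposition~\ref{extension of collineation} would furnish a bi-positive linear extension $\hat T\colon Y\oplus\mathrm{Span}\{z\}\to X_2$ of $T$ with $[\hat Ty]=f(y)$ for all $y\in(Y\oplus\mathrm{Span}\{z\})\cap\Sigma_1$, contradicting maximality; hence $\Sigma_1\subseteq Y$. Since $\varphi_1$ is strictly positive, $C_1^\circ=\bigcup_{\lambda>0}\lambda\Sigma_1$, and as $C_1^\circ$ is a nonempty open subset of $X_1$ it spans $X_1$, so $\mathrm{Span}(\Sigma_1)=X_1$. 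Because $Y$ is a subspace containing $\Sigma_1$, we conclude $Y=X_1$, so $T\colon X_1\to X_2$ is bi-positive with $f=[T]$ on $\Sigma_1=P(C_1^\circ)$; that is, $f$ is a projective linear homomorphism.

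The substantive difficulty — extending a collineation across one further dimension while keeping it bi-positive and compatible with $f$ — is entirely packaged in Proposition~\ref{extension of collineation}, so at this stage there is no serious obstacle; the one point that still needs care is the concluding spanning argument, since the extension step by itself only forces the maximal subspace to contain all of $\Sigma_1$, and one must additionally observe that $\Sigma_1$ spans $X_1$ to deduce that the maximal subspace is the whole space.
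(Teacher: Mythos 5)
Your proof is correct and follows essentially the same route as the paper: Zorn's Lemma applied to the poset of bi-positive partial linear extensions compatible with $f$, with Proposition~\ref{extension of collineation} ruling out a proper maximal element, so that the maximal subspace contains $\Sigma_1$. The only (immaterial) difference is the final step, where you observe that the nonempty open set $C_1^\circ=\bigcup_{\lambda>0}\lambda\Sigma_1$ spans $X_1$, whereas the paper writes $x=y-z$ with $y,z\in C_1$ and perturbs $\frac{1}{k}y,\frac{1}{k}z$ into $C_1^\circ$; both arguments are valid.
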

\begin{proof}Consider the collection $\mathcal{C}$ of pairs $(Y,T_Y)$ where $Y\subseteq X_1$ is a linear subspace and $T_Y\colon Y\to X_2$ is a bi-positive
linear  map such that $[T_Yx]=f(x)$ for all $x\in Y\cap\Sigma_1$. Note that $\mathcal{C}\neq\emptyset$, since $C_1^\circ\neq\emptyset$. We can define a
partial order $\le$ on $\mathcal{C}$ by 
\[
(Y,T_Y)\le(Z,T_Z)\ \ \mbox{if}\ \ Y\subseteq Z\ \mbox{and}\ T_Zy=T_Yy\ \mbox{for all $y\in Y$}.
\] 
Let $(Y_i,T_{Y_i})_{i\in I}$ be a totally ordered subset in $\mathcal{C}$. Put $Y:=\bigcup_{i\in I}Y_i$ and define $T_Y\colon Y\to X_2$ through
$T_Yy_i:=T_{Y_i}y_i$. Clearly, we have that $Y$ is a linear subspace of $X_1$ and $T_Y$ is a well defined bi-positive  linear map. For $y\in
Y\cap\Sigma_1$ we have $y\in Y_i\cap\Sigma_1$ for some $i\in I$ and $[T_Yy]=[T_{Y_i}y]=f(y)$, so $(Y,T_Y)\in\mathcal{C}$ is an upper bound. By Zorn's lemma our
collection $\mathcal{C}$ contains a maximal element $(\Omega,T_\Omega)$. Suppose that $x\in \Sigma_1\setminus\Omega$. By Proposition \ref{extension of
collineation} we have a bi-positive  linear extension $$\hat{T}_\Omega\colon\Omega\oplus\mathrm{Span}\{x\}\to X_2$$ of $T_\Omega$ such that
$[\hat{T}_\Omega z]=f(z)$ for all $z\in(\Omega\oplus\mathrm{Span}\{x\})\cap\Sigma_1$. But now we have
$(\Omega\oplus\mathrm{Span}\{x\},\hat{T}_\Omega)\in\mathcal{C}$ and $(\Omega,T_\Omega)\leq(\Omega\oplus\mathrm{Span}\{x\},\hat{T}_\Omega)$, which contradicts
the maximality of $(\Omega,T_\Omega)$. We conclude that $\Omega\cap\Sigma_1=\Sigma_1$ and therefore $[T_\Omega x]=f(x)$ for all $x\in\Sigma_1$.

We claim that $\Omega=X_1$. Let $x\in X_1,\ u\in\Sigma_1$ and $\epsilon>0$ such that $B_\epsilon(u)\subseteq C_1^\circ$. Also, there are $y,z\in C_1$ such that
$x=y-z$, as $C_1-C_1=X_1$. Now, for $k\ge 1$ large enough, we have that $\frac{1}{k}y+u,\frac{1}{k}z+u\in B_\epsilon(u)$. Since $\Sigma_1\subseteq\Omega$, it
follows that $C_1^\circ\subseteq\Omega$; hence
$$\textstyle{\frac{1}{k}}x=\left(\textstyle{\frac{1}{k}}y+u\right)-\left(\textstyle{\frac{1}{k}}z+u\right)\in\Omega,$$ so $x\in\Omega$.

Obviously any projective linear homomorphism maps line segments to line segments, and hence we are done.
\end{proof}
Note that Lemma~\ref{lem:automatic_continuity} shows that the linear map $T$ in the previous theorem is continuous with respect to the order unit norm.
We also  note that Theorem \ref{isometry on hilbert is collinear} implies that two uniquely geodesic Hilbert geometries $\Omega_1$ and $\Omega_2$ are isometric if and only if there exists a projective linear isomorphism between them.
 
\begin{remark} An important variant of Hilbert's metric is Thompson's metric which  was introduced in \cite{Thom}. On the interior of a cone $C$ in an order unit space, {\em Thompson's metric} is given by 
\[
d_T(x,y):=\log \max\{M(x/y),M(y/x)\} \mbox{\quad for }x,y\in C^\circ.
\]
It was shown in \cite[Theorem 8.2]{LR} that if $C$ is a finite dimensional strictly convex cone with $\dim C\geq 3$, then for every isometry $f$ of $(C^\circ,d_T)$ there exists a bi-positive linear map $T\colon X\to X$ such that for each $x\in C^\circ$ we have that $f(x) =\lambda_x Tx$ for some $\lambda_x>0$. The proof of this result relies on \cite[Proposition 3]{dlH}. Using Theorem  \ref{isometry on hilbert is collinear} it is straightforward to extend \cite[Theorem 8.2]{LR} to infinite dimensional strictly convex cones. 
\end{remark} 

\section{Infinite dimensional simplices}
Let $K$ be a compact Hausdorff space and $C(K)$ denote the space of real-valued continuous functions on $K$. Consider the cone $C(K)_{+}$ consisting of
nonnegative functions with interior, 
\[
C(K)_{+}^\circ:=\{f\in C(K)\colon f(x)>0\mbox{ for all }x\in K\}.
\]

It is well known that the Hilbert geometry on a finite dimensional simplex  is isometric to a finite dimensional normed space. The same is true for $(P(C(K)_{+}^\circ),d_H)$, see \cite[Proposition 1.7]{NMem}. 
It will be useful to recall the basic argument.  Let  $\mathbf{1}$ be the constant one function on $K$ and denote the elements in the quotient space $  \overline{C(K)}  :=
C(K)/\mathbb{R}\mathbf{1}$ by $\overline{g}$. The map $\mathrm{Log}\colon P(C(K)_+^\circ)\to \overline{C(K)}$ given  by, 
\[
\mathrm{Log}(\overline{f}) := \overline{\log\circ f}\mbox{\quad for }\overline{f}\in P(C(K)^\circ_+),
\]
is an isometry of $(P(C(K))_+^\circ,d_H)$ onto $(\overline{C(K)},\|\cdot\|_{\mathrm{var}})$, where 
\[
\|\overline{g}\|_{\mathrm{var}} :=\sup_{x\in K} g(x) -\inf_{x\in K} g(x),
\]
is the {\em variation norm}. To see this note that  
\[
M(f/g)=\inf\{\beta>0:f(x)\le\beta g(x)\ \mbox{for all $x\in K$}\}=\sup_{x\in K}\frac{f(x)}{g(x)}\] 
for $f,g\in P(C(K)_+^\circ)$, so that 
\begin{align*}d_H(f,g)&=\sup_{x\in K}\frac{\log f(x)}{\log g(x)}+\sup_{x\in K}\frac{\log g(x)}{\log f(x)}\\&=\sup_{x\in K}\left(\log f(x)-\log g(x)\right)-\inf_{x\in K}\left(\log f(x)-\log g(x)\right)\\&=\|\mathrm{Log}(\overline{f})-\mathrm{Log}(\overline{g})\|_{\mathrm{var}}.
\end{align*}

Given a function $f\in C(K)$ the supremum norm of its translations $f-\lambda\mathbf{1}$ for $\lambda\in\mathbb{R}$ is minimised precisely when translating $f$ by the average of both extreme values $\frac{1}{2}(\sup_{x\in K}f(x)+\inf_{x\in K}f(x))$. This means that the quotient norm on $C(K)/\mathbb{R}\mathbf{1}$ with respect to the supremum norm is exactly half of $\|\cdot\|_{\mathrm{var}}$. This assertion is made precise in the following lemma.
\begin{lemma}\label{sup norm on quotient} Let $K$ be a compact Hausdorff space. If $\|\cdot\|_q$ is the quotient norm on $\overline{C(K)}$
with respect to $2\|\cdot\|_\infty$, then $\|\cdot\|_q$ coincides with $\|\cdot\|_{\mathrm{var}}$ on $\overline{C(K)}$.
\end{lemma}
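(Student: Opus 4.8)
The plan is to compute the quotient norm $\|\overline{g}\|_q = \inf_{\lambda \in \R} 2\|g - \lambda \mathbf{1}\|_\infty$ directly and show it equals $\|\overline{g}\|_{\mathrm{var}} = \sup_{x} g(x) - \inf_x g(x)$. Writing $M := \sup_{x \in K} g(x)$ and $m := \inf_{x\in K} g(x)$ (both finite since $K$ is compact and $g$ continuous), the function $g - \lambda\mathbf{1}$ takes values in $[m - \lambda, M - \lambda]$, so $\|g - \lambda\mathbf{1}\|_\infty = \max\{|M - \lambda|, |m - \lambda|\}$.

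First I would observe that this last quantity, as a function of $\lambda \in \R$, is minimised exactly at the midpoint $\lambda_0 := \tfrac{1}{2}(M + m)$, where it attains the value $\tfrac{1}{2}(M - m)$. This is the elementary fact that the Chebyshev centre of the two-point set $\{m, M\}$ is its midpoint: for any $\lambda$, at least one of $|M - \lambda|$, $|m - \lambda|$ is at least $\tfrac{1}{2}(M - m)$ by the triangle inequality $|M - \lambda| + |\lambda - m| \geq M - m$, and at $\lambda = \lambda_0$ both equal $\tfrac{1}{2}(M-m)$. Hence $\inf_{\lambda} \|g - \lambda\mathbf{1}\|_\infty = \tfrac{1}{2}(M - m)$, and multiplying by $2$ gives $\|\overline{g}\|_q = M - m = \|\overline{g}\|_{\mathrm{var}}$.

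One point requiring a word of care is that the infimum defining the quotient norm is over all representatives $g + \mu\mathbf{1}$ of the class $\overline{g}$, i.e. over $\inf_{\mu} 2\|g + \mu\mathbf{1}\|_\infty$; but this is the same as $\inf_\lambda 2\|g - \lambda\mathbf{1}\|_\infty$ after the substitution $\lambda = -\mu$, and the computation above shows the value depends only on $M - m$, which is manifestly constant on the class $\overline{g}$ (adding a constant to $g$ shifts both $M$ and $m$ by the same amount). So the expression is well defined on $\overline{C(K)}$ and the identity holds for every class.

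I do not expect a genuine obstacle here; the only thing to be careful about is not to conflate the sup-norm quotient with the variation norm off by the factor of $2$ — which is precisely why the lemma is stated with $2\|\cdot\|_\infty$ — and to note the infimum is actually attained (at the representative $g - \lambda_0\mathbf{1}$), so no limiting argument is needed. The whole proof is a two-line calculation once the midpoint observation is in place.
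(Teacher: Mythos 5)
Your proof is correct and is essentially the same computation as the paper's: the paper also reduces $\inf_\lambda\|f-\lambda\mathbf{1}\|_\infty$ to minimising $(\sup f-\lambda)\vee(-\inf f+\lambda)$ over $\lambda$ and invokes the same elementary midpoint fact $\inf_{\lambda}(a-\lambda)\vee(b+\lambda)=(a+b)/2$. No issues.
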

\begin{proof}
 Let $f \in C(K)$ and write $a\vee b=\max\{a,b\}$ for $a,b\in\R$. Using the elementary fact that $\inf_{\lambda \in \R} (a-\lambda) \vee (b + \lambda) = (a+b)/2$, we see that
 \begin{align*}
  \norm{\overline{f}}_q &= 2 \inf_{\lambda \in \R} \norm{f - \lambda\mathbf{1}}_\infty = 2 \inf_{\lambda \in \R} \sup_{s \in K} |f(s) - \lambda| \\
&= 2 \inf_{\lambda \in \R} \left[ \left( \sup_{s \in K} f(s) - \lambda \right) \vee \left( \sup_{s \in K} -f(s) + \lambda \right) \right] \\
&= \sup_{s \in K} f(s) + \sup_{s \in K} -f(s) = \sup_{s \in K} f(s) - \inf_{s \in K} f(s) = \norm{\overline{f}}_{\mathrm{var}}.
 \end{align*}
\end{proof}

Our findings so far have shown that in order to describe the surjective isometries $$h\colon(P(C(K_1)^\circ_+),d_H)\to(P(C(K_2)^\circ_+),d_H),$$ it suffices to
understand the surjective isometries $$T\colon(\overline{C(K_1)},\|\cdot\|_q)\to(\overline{C(K_2)},\|\cdot\|_q).$$ By the Mazur-Ulam theorem these
isometries $T$ must be affine. We can compose $T$ with an appropriate translation to make it linear. Thus, our goal will be to classify all 
isometric isomorphisms (surjective linear isometries) $T\colon(\overline{C(K_1)},\|\cdot\|_q)\to(\overline{C(K_2)},\|\cdot\|_q)$.

We will follow the lines of the proof for the Banach-Stone theorem \cite[Theorem VI.2.1]{Con}. The Banach-Stone theorem characterises the isometric isomorphisms
between $(C(K),\|\cdot\|_\infty)$ spaces. A common way to prove the Banach-Stone theorem is by looking at the adjoint operator, which is an isometry on the dual
of $(C(K),\|\cdot\|_\infty)$, and to exploit the extreme points of the unit ball there. We shall take a similar approach. 

The dual of $(C(K),\|\cdot\|_\infty)$ is $(M(K),\|\cdot\|_{TV})$, where $M(K)$ is the space of all regular signed Borel measures on $K$ and
$\|\mu\|_{TV}:=|\mu|(K)$ is the {\em total variation norm}.  Let us recall some basic facts about $(M(K),\|\cdot\|_{TV})$, which can be found in \cite[Appendix
C]{Con}. 

Every $\mu\in M(K)$ has a Hahn-Jordan decomposition $\mu=\mu^+-\mu^-$ where $\mu^+$ and $\mu^-$ are positive measures in $M(K)$, and \begin{align}\label{total
variation}\|\mu\|_{TV}=\|\mu^+\|_{TV}+\|\mu^-\|_{TV}.\end{align} Also, if $\mu,\nu\in M(K)$ are positive, we have $\|\mu+\nu\|_{TV}=\|\mu\|_{TV}+\|\nu\|_{TV}$.
The space $M(K)$ is a lattice: every $\mu, \nu \in M(K)$ have a supremum (least upper bound) $\mu \vee \nu$. The set $P(K)$ denotes the set of
probability measures on $K$, and its extreme points are the set of Dirac measures $\{\delta_s \colon s \in K\}$. The map $s \mapsto \delta_s$ is a
homeomorphism from $K$ onto $\{\delta_s \colon s \in K\}$ equipped with the weak*-topology.

The dual space of $(C(K)/\mathbb{R}\mathbf{1},\|\cdot\|_q)$ is
$\mathbb{R}\mathbf{1}^\perp\subseteq(M(K),\frac{1}{2}\|\cdot\|_{TV})$, where $\mathbb{R}\mathbf{1}^\perp :=\{\mu \in M(K)\colon \mu(K)=0\}$. It follows that  
$$
\mathbb{R}\mathbf{1}^\perp=\{\mu\in M(K):\|\mu^+\|_{TV}=\|\mu^-\|_{TV}\}.
$$

Now, if $$T\colon (\overline{C(K_1)},\|\cdot\|_q)\to(\overline{C(K_2)},\|\cdot\|_q)$$ is an isometric isomorphism, then the
corresponding adjoint operator
$$T^*\colon(\mathbb{R}\mathbf{1}_2^\perp,\textstyle{\frac{1}{2}}\|\cdot\|_{TV})\to(\mathbb{R}\mathbf{1}_1^\perp,\textstyle{\frac{1}{2}}\|\cdot\|_{TV})$$ is a
isometric isomorphism as well. Moreover, $T^*$ is a weak*-homeomorphism from the unit ball $B_2\subseteq\mathbb{R}\mathbf{1}_2^\perp$ onto the unit ball
$B_1\subseteq\mathbb{R}\mathbf{1}_1^\perp$ that maps the set of extreme points of $B_2$, denoted $\mathrm{ext}(B_2)$, bijectively onto the set  of the extreme points of $B_1$, denoted $\mathrm{ext}(B_1)$. The following lemma tells us  that the extreme points  are exactly the differences of Dirac measures.

\begin{proposition}\label{extreme points}Let $K$ be a compact Hausdorff space. The set of extreme points, $\mathrm{ext}(B)$, of the unit sphere $B$ in
$\mathbb{R}\mathbf{1}^\perp\subseteq(M(K),\frac{1}{2}\|\cdot\|_{TV})$ satisfies  $$\mathrm{ext}(B)=\left\{\delta_s-\delta_t: s,t\in K\mbox{ and }s\neq
t\right\}.$$
\end{proposition}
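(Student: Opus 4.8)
The plan is to prove both inclusions. For the easy direction, suppose $s\neq t$ in $K$; then $\mu := \delta_s - \delta_t$ has $\mu^+ = \delta_s$, $\mu^- = \delta_t$, so $\|\mu^+\|_{TV} = \|\mu^-\|_{TV} = 1$, hence $\mu \in \mathbb{R}\mathbf{1}^\perp$ and $\frac{1}{2}\|\mu\|_{TV} = 1$, so $\mu \in B$. To see it is extreme, suppose $\delta_s - \delta_t = \frac{1}{2}(\mu_1 + \mu_2)$ with $\mu_1, \mu_2 \in B$. I would push this down to a statement inside $P(K)$: writing each $\mu_j = \mu_j^+ - \mu_j^-$, the positive parts $\mu_j^+$ are probability measures (norm $1$ after the $\frac12$ normalisation on $B$), and likewise the $\mu_j^-$. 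Taking positive and negative parts of the identity and using that $\delta_s \vee \delta_t$-type lattice arguments force $\delta_s = \frac12(\mu_1^+ + \mu_2^+)$ and $\delta_t = \frac12(\mu_1^- + \mu_2^-)$ (this needs the observation that $\mu_1^+ + \mu_2^+$ and $\mu_1^- + \mu_2^-$ are mutually singular, because they are dominated by $\delta_s$ and $\delta_t$ respectively, which are mutually singular since $s \neq t$). Then extremality of the Dirac measures $\delta_s, \delta_t$ among probability measures gives $\mu_1^+ = \mu_2^+ = \delta_s$ and $\mu_1^- = \mu_2^- = \delta_t$, so $\mu_1 = \mu_2 = \delta_s - \delta_t$.

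For the harder inclusion $\mathrm{ext}(B) \subseteq \{\delta_s - \delta_t\}$, let $\mu \in \mathrm{ext}(B)$, so $\mu \in \mathbb{R}\mathbf{1}^\perp$ with $\|\mu^+\|_{TV} = \|\mu^-\|_{TV} = 1$ (the common value must be $1$, else scaling shows $\mu$ is not even on the unit sphere, or is an interior point and hence not extreme). Thus $\mu^+$ and $\mu^-$ are mutually singular probability measures. I claim $\mu^+$ is a Dirac measure. If not, then $\mu^+$ is not an extreme point of $P(K)$, so $\mu^+ = \frac{1}{2}(\nu_1 + \nu_2)$ with $\nu_1 \neq \nu_2$ in $P(K)$; since $\mu^+$ is carried by the support set disjoint from that of $\mu^-$, so are $\nu_1, \nu_2$, and hence $\nu_i - \mu^- \in \mathbb{R}\mathbf{1}^\perp$ with $(\nu_i - \mu^-)^+ = \nu_i$, $(\nu_i - \mu^-)^- = \mu^-$, giving $\frac12\|\nu_i - \mu^-\|_{TV} = 1$, i.e. $\nu_i - \mu^- \in B$. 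Then $\mu = \frac{1}{2}((\nu_1 - \mu^-) + (\nu_2 - \mu^-))$ is a nontrivial convex combination, contradicting extremality. So $\mu^+ = \delta_s$ for some $s$; symmetrically $\mu^- = \delta_t$, and $s \neq t$ by mutual singularity. Hence $\mu = \delta_s - \delta_t$.

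The main obstacle I anticipate is the bookkeeping around mutual singularity: one must repeatedly use that if two positive measures $\alpha, \beta$ satisfy $\alpha \leq \delta_s$ and $\beta \leq \delta_t$ with $s \neq t$ then $\alpha, \beta$ are mutually singular, and that taking Hahn--Jordan (positive/negative) parts is compatible with convex combinations only when the relevant pieces already live on disjoint carriers. Care is also needed to justify that a convex combination of two elements of $B$ landing at $\mu$ forces the positive parts to combine to $\mu^+$ and negative parts to $\mu^-$ — this is where the lattice structure of $M(K)$ recorded in the excerpt, together with the additivity of $\|\cdot\|_{TV}$ on positive measures, does the work. Everything else is routine once these singularity facts are in place.
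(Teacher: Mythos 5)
Your overall strategy coincides with the paper's: both directions run through the Hahn--Jordan decomposition, the fact that the extreme points of $P(K)$ are the Dirac measures, and the additivity of $\|\cdot\|_{TV}$ on positive measures. Your second (``harder'') direction is correct; it is in fact more careful than necessary, since once you know $\nu_i - \mu^- \in \mathbb{R}\mathbf{1}^\perp$, the triangle inequality $\frac{1}{2}\|\nu_i-\mu^-\|_{TV} \le \frac{1}{2}(\|\nu_i\|_{TV}+\|\mu^-\|_{TV}) = 1$ already places $\nu_i - \mu^-$ in $B$ (which is really the unit ball here, despite the word ``sphere'' in the statement), so the mutual-singularity bookkeeping identifying $(\nu_i-\mu^-)^{\pm}$ can be skipped entirely; this is how the paper argues.

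The one genuine problem is in the first direction, where your justification is circular. You assert that $\mu_1^+ + \mu_2^+$ and $\mu_1^- + \mu_2^-$ are mutually singular ``because they are dominated by $\delta_s$ and $\delta_t$ respectively''---but that domination is exactly what has to be proved, and it is false for a general decomposition $\delta_s-\delta_t = P - N$ with $P,N\ge 0$: a priori $P := \frac{1}{2}(\mu_1^++\mu_2^+)$ could carry mass away from $s$ (for instance $\mu_1=\delta_s-\delta_u$, $\mu_2=\delta_s-2\delta_t+\delta_u$ gives such a decomposition; only the norm constraint on the $\mu_i$ rules this out). The available inequality points the other way: since $P \ge \delta_s-\delta_t$ and $P\ge 0$, the lattice structure gives $P \ge (\delta_s-\delta_t)\vee 0 = \delta_s$, so $\eta := P-\delta_s \ge 0$; then additivity of $\|\cdot\|_{TV}$ on positive measures yields $1+\|\eta\|_{TV} = \|P\|_{TV} = \frac{1}{2}\|\mu_1^+\|_{TV}+\frac{1}{2}\|\mu_2^+\|_{TV} \le 1$, forcing $\eta = 0$. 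This gives $\delta_s = \frac{1}{2}(\mu_1^++\mu_2^+)$ directly, after which extremality of $\delta_s$ in $P(K)$ finishes as you say, and the symmetric argument handles $\delta_t$. So you have named the right ingredients (lattice structure plus norm additivity), but they must be assembled in this order; mutual singularity of $P$ and $N$ is a consequence of the argument, not a usable hypothesis.
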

\begin{proof}
Let $\delta_s-\delta_t\in B$ with $s\neq t$ and suppose $\mu,\nu\in B$ are such that 
\[ \delta_s-\delta_t=\frac{1}{2} \left(\mu+\nu \right) = \frac{1}{2}\left(\mu^+ + \nu^+ \right) - \frac{1}{2} \left( \mu^- + \nu^- \right). \]
Then $\delta_s = (\delta_s-\delta_t) \vee 0 \leq \textstyle{\frac{1}{2}}(\mu^++\nu^+)$, and so for $\eta := \frac{1}{2}(\mu^+ + \nu^+) - \delta_s \geq 0$ it
follows from \eqref{total variation} that 
\[ 1 + \|\eta\|_{TV} = \|\delta_s+\eta\|_{TV}=\textstyle{\frac{1}{2}}\|\mu^++\nu^+
\|_{TV}=\textstyle{\frac{1}{2}}\|\mu^+\|_{TV}+\textstyle{\frac{1}{2}}\|\nu^+\|_{TV}
=1, \]
 so $\eta=0$, which yields $\delta_s=\frac{1}{2}(\mu^++\nu^+)$. The fact that $\delta_s$ is an extreme point in $B_{M(K)}$ (see \cite[Theorem
V.8.4]{Con}) implies that $\mu^+=\nu^+$. Similarly, we have that $\delta_t\le\frac{1}{2}(\mu^-+\nu^-)$ which implies $\mu^-=\nu^-$. This shows that
$\delta_s-\delta_t\in\mathrm{ext}(B)$.

Conversely, let $\mu = \mu^+ - \mu^- \in \mathrm{ext}(B)$, and suppose $\mu^+ = \frac{1}{2}(\nu_1 + \nu_2)$ for $\nu_1, \nu_2 \in P(K)$. Then for $i = 1,2$,
\[ \frac{1}{2} \norm{\nu_i - \mu^-}_{TV} \leq \frac{1}{2}\left( \norm{\nu_i}_{TV} + \norm{\mu^-}_{TV}\right) = 1, \]
so $\nu_i - \mu^- \in B$. Since $\mu \in \mathrm{ext}(B)$ and $\mu = \frac{1}{2}(\nu_1 - \mu^-) + \frac{1}{2}(\nu_2 - \mu^-)$, we obtain $\nu_1 = \nu_2$. Hence
$\mu^+ \in \mathrm{ext}(P(K))$, which implies that $\mu^+ = \delta_s$ for some $s \in K$ by \cite[Theorem V.8.4]{Con}. A similar argument yields $\mu^- = \delta_t$ for some $t \in K$.
\end{proof}

For $s\in K$ define the sets $E_s:=\{\delta_s-\delta_t: t\in K,\ s\neq t\}$. Clearly the distance between distinct elements of $E_s$ is 1, and it turns
out that the sets $\pm E_s$ are the maximal equilateral subsets in $\mathrm{ext}(B)$ of mutual distance 1.

\begin{lemma}\label{maximal sets in ext(B)} Let $K$ be a compact Hausdorff space and $\emptyset\neq A\subseteq\mathrm{ext}(B)$ be such that $\frac{1}{2}\|\mu-\nu\|_{TV}=1$ for all $\mu,\nu\in A$ with $\mu\neq\nu$, then there is an element $s\in K$ such that $A\subseteq E_s$ or $A\subseteq -E_s$. 
\end{lemma}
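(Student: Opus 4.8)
The plan is to analyze the structure of an equilateral set $A \subseteq \mathrm{ext}(B)$ of mutual distance $1$ and show it must be confined to a single "star" $E_s$ or $-E_s$. First I would recall that by Proposition \ref{extreme points} every element of $A$ has the form $\delta_s - \delta_t$ with $s \neq t$, and I would compute $\frac{1}{2}\|(\delta_s - \delta_t) - (\delta_{s'} - \delta_{t'})\|_{TV}$ in the various cases: when $\{s,t\}$ and $\{s',t'\}$ are disjoint the distance is $2$; when they share exactly one index the distance is $1$; when $\{s,t\} = \{s',t'\}$ as sets the two elements are either equal or negatives of each other (distance $0$ or $2$). So the constraint that all mutual distances equal $1$ forces: (i) no two elements of $A$ can have disjoint supports, and (ii) no element of $A$ is the negative of another. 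Thus the supports $\{s,t\}$ of elements of $A$ pairwise intersect.

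Next, I would use a Helly-type / sunflower argument on these two-element sets. Since the supports pairwise intersect, either they all share a common point, or there are three of them forming a "triangle" $\{s,t\}, \{t,u\}, \{s,u\}$ with no common point. The key step is to rule out the triangle configuration: if $\delta_s - \delta_t$ and $\delta_t - \delta_u$ and the element supported on $\{s,u\}$ are all in $A$, I check the signs. Say $\delta_s - \delta_t \in A$. Then the element supported on $\{t,u\}$ must be $\delta_u - \delta_t$ (not $\delta_t - \delta_u$): one checks that $\frac{1}{2}\|(\delta_s - \delta_t) - (\delta_t - \delta_u)\|_{TV} = \frac{1}{2}\|\delta_s + \delta_u - 2\delta_t\|_{TV} = \frac{1}{2}(1 + 1 + 2) = 2 \neq 1$, while $\frac{1}{2}\|(\delta_s - \delta_t) - (\delta_u - \delta_t)\|_{TV} = \frac{1}{2}\|\delta_s - \delta_u\|_{TV} = 1$. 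So both $\delta_s - \delta_t$ and $\delta_u - \delta_t$ lie in $A$, i.e. $t$ is the common "negative" point of these two. Then the third element supported on $\{s,u\}$ must, by the same distance computation applied to the pair $(\delta_s - \delta_t, \cdot)$, be $\pm(\delta_s - \delta_u)$; but $\frac{1}{2}\|(\delta_s - \delta_t) - (\delta_u - \delta_s)\|_{TV} = \frac12\|2\delta_s - \delta_t - \delta_u\|_{TV} = 2$, so it must be $\delta_s - \delta_u$, and then checking against $\delta_u - \delta_t$ gives $\frac12\|(\delta_s-\delta_u)-(\delta_u-\delta_t)\|_{TV} = \frac12\|\delta_s + \delta_t - 2\delta_u\|_{TV} = 2 \neq 1$, a contradiction.

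Therefore the triangle is impossible, so by the standard fact that a family of pairwise-intersecting $2$-element sets with no $3$-element "sunflower triangle" has a common element (indeed, pairwise-intersecting $2$-sets either all contain a fixed point or form such a triangle), there is a single $s \in K$ lying in the support of every element of $A$. Finally, I would fix one element $\delta_s - \delta_{t_0} \in A$ (WLOG by replacing $A$ with $-A$, which does not change the hypotheses, assume $s$ appears with the $+$ sign here) and show every other $\mu = \delta_a - \delta_b \in A$ has $a = s$: since $s \in \{a,b\}$, either $\mu = \delta_s - \delta_b \in E_s$ as desired, or $\mu = \delta_a - \delta_s$, in which case $\frac12\|(\delta_s - \delta_{t_0}) - (\delta_a - \delta_s)\|_{TV} = \frac12\|2\delta_s - \delta_{t_0} - \delta_a\|_{TV}$, which is $2$ if $a \neq t_0$ and (if $a = t_0$) equals $\frac12\|2\delta_s - 2\delta_{t_0}\|_{TV} = 2$ as well — contradiction in both cases. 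Hence $A \subseteq E_s$, and after the possible sign flip this gives $A \subseteq E_s$ or $A \subseteq -E_s$.

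The main obstacle is the bookkeeping in the sign analysis ruling out the triangle configuration: one must carefully track which of $\delta_x - \delta_y$ versus $\delta_y - \delta_x$ can coexist, and the combinatorial claim that pairwise-intersecting $2$-sets without a common point form a triangle needs to be invoked cleanly. Everything else reduces to the elementary total-variation computation $\frac12\|\sum c_i \delta_{x_i}\|_{TV} = \frac12 \sum |c_i|$ for distinct points $x_i$, together with Proposition \ref{extreme points}.
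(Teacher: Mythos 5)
Your proof is correct, and at bottom it rests on the same elementary fact as the paper's: for distinct points the measure $\sum_i c_i\delta_{x_i}$ has total variation $\sum_i|c_i|$, so two extreme points $\delta_s-\delta_t$ and $\delta_p-\delta_q$ are at distance $1$ precisely when they agree in the first coordinate or agree in the second (i.e.\ lie in a common $E_r$ or a common $-E_r$), and at distance $2$ otherwise. The paper's proof is just the two-line contrapositive of this: if $A$ lies in no $E_s$ and no $-E_s$, one extracts two elements with $s\neq p$ and $t\neq q$, whose distance is then $2$. You organise the combinatorics around \emph{unordered} supports, and it is exactly this choice that creates the ``triangle'' configuration you must then exclude by a sign analysis; phrasing it directly in terms of ordered pairs (any two elements of $A$ agree in the first or the second slot) makes the star conclusion immediate with no triangle case, since two elements disagreeing in the first slot must share their second point $t$, and an element whose second point is not $t$ would then have to share its first point with both of them. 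One inaccuracy to repair: your blanket claim that two extreme points whose supports share exactly one index are at distance $1$ is false as stated --- $\delta_s-\delta_t$ and $\delta_t-\delta_u$ share the index $t$ but are at distance $2$. This does not damage the argument, because you only use the disjoint-support and equal-support cases to derive (i) and (ii), and you carry out the sign bookkeeping correctly in the later steps, but the initial case list should be corrected to say that sharing an index gives distance $1$ only when the shared index occurs with the same sign in both elements.
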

\begin{proof}
If such an $s$ does not exist, then there exist elements $\delta_s - \delta_t, \delta_p - \delta_q \in A$ with $s \not= p$ and $t \not= q$.
But clearly 
\[
\frac{1}{2}\norm{(\delta_s - \delta_t) - (\delta_p - \delta_q)}_{TV} = 2.
\] 
\end{proof} 

 Now let $T \colon \overline{C(K_1)} \to \overline{C(K_2)}$ be an isometric isomorphism. Then the isometric isomorphism $T^*$ preserves the maximal equilateral subsets of the extreme points of mutual distance 1. Hence $T^*(E_s) = \pm E_{\theta(s)}$. Note that  if $s \not= t$, then $E_s \cap E_t = \emptyset$. But $E_s \cap -E_t \not= \emptyset$, as $\delta_s - \delta_t \in E_s\cap -E_t$. As $T^*$ maps disjoint sets to disjoint sets, either $T^*(E_s) = E_{\theta(s)}$ for all $s \in K_2$, or, $T^*(E_s) = -E_{\theta(s)}$ for all $s \in
K_2$. Thus, there exists  $\eps \in \{-1,1\}$ such that  $T^*(E_s) = \eps E_{\theta(s)}$ for all $s\in K_2$, and $\theta$ is a bijection from $K_2$ to $K_1$.

\begin{lemma}\label{homeomorphism of K}
The above constructed bijection $\theta \colon K_2 \to K_1$ is a homeomorphism.
\end{lemma}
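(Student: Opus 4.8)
The plan is to prove that $\theta$ is continuous; since $K_2$ is compact, $K_1$ is Hausdorff and $\theta$ is a bijection, continuity automatically upgrades to the statement that $\theta$ is a homeomorphism (a continuous bijection from a compact space onto a Hausdorff space is closed, hence its inverse is continuous).

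First I would make the action of $T^*$ on a single difference of Dirac measures completely explicit. We already know $T^*(E_s)=\eps E_{\theta(s)}$ for all $s\in K_2$, and since $\delta_s-\delta_t\in E_s$ while $\delta_s-\delta_t=-(\delta_t-\delta_s)\in -E_t$, we get $T^*(\delta_s-\delta_t)\in \eps E_{\theta(s)}\cap(-\eps E_{\theta(t)})=\eps\big(E_{\theta(s)}\cap -E_{\theta(t)}\big)$. A direct computation shows that for $a\ne b$ one has $E_a\cap -E_b=\{\delta_a-\delta_b\}$, because $\delta_a-\delta_x=\delta_y-\delta_b$ forces $\delta_a+\delta_b=\delta_x+\delta_y$ and hence (as $x\ne a$) $x=b$, $y=a$. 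Since $\theta$ is injective, $\theta(s)\ne\theta(t)$, and therefore
\[ T^*(\delta_s-\delta_t)=\eps\big(\delta_{\theta(s)}-\delta_{\theta(t)}\big)\qquad\text{for all }s\ne t\text{ in }K_2. \]

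Next I would recall two topological facts. The evaluation map $s\mapsto\delta_s$ is a homeomorphism of $K_i$ onto the Dirac measures equipped with the weak*-topology (stated above). And $T^*$, being the adjoint of the isometric isomorphism $T\colon\overline{C(K_1)}\to\overline{C(K_2)}$, is a weak*-continuous bijection $\mathbb{R}\mathbf{1}_2^\perp\to\mathbb{R}\mathbf{1}_1^\perp$ with weak*-continuous inverse $(T^{-1})^*$; here the weak*-topology on $\mathbb{R}\mathbf{1}_i^\perp$ as the dual of $\overline{C(K_i)}$ coincides with the subspace topology inherited from $(M(K_i),\text{weak}^*)$, since the quotient map $C(K_i)\to\overline{C(K_i)}$ is a surjection. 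Now let $(s_\alpha)$ be a net in $K_2$ with $s_\alpha\to s$, and fix $t\in K_2$ with $t\ne s$. As $K_2$ is Hausdorff, $s_\alpha\ne t$ eventually, and then $\delta_{s_\alpha}-\delta_t\to\delta_s-\delta_t$ weak* in $\mathbb{R}\mathbf{1}_2^\perp$. Applying $T^*$ and the displayed formula yields $\eps(\delta_{\theta(s_\alpha)}-\delta_{\theta(t)})\to\eps(\delta_{\theta(s)}-\delta_{\theta(t)})$ weak*, hence $\delta_{\theta(s_\alpha)}\to\delta_{\theta(s)}$ weak* in $M(K_1)$, and so $\theta(s_\alpha)\to\theta(s)$ in $K_1$ because $s\mapsto\delta_s$ is a homeomorphism onto its image. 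Thus $\theta$ is continuous, and the opening remark completes the proof.

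I do not expect a serious obstacle here. The only points that need care are the bookkeeping that pins down $T^*(\delta_s-\delta_t)$ exactly (rather than just up to which Dirac measure is subtracted) and the routine identification of the weak*-topology on $\mathbb{R}\mathbf{1}_i^\perp$ with the subspace topology from $M(K_i)$; everything else is a direct net chase through weak*-continuity.
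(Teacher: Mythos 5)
Your proof is correct and follows essentially the same route as the paper: a net argument using the weak*-continuity of $T^*$ together with the fact that a continuous bijection from a compact space onto a Hausdorff space is a homeomorphism. The only difference is cosmetic — you pin down $T^*(\delta_s-\delta_t)=\eps(\delta_{\theta(s)}-\delta_{\theta(t)})$ explicitly via $E_a\cap -E_b=\{\delta_a-\delta_b\}$ and test against a fixed auxiliary point $t$, whereas the paper applies $T^*$ directly to $\delta_{s_\alpha}-\delta_s$; both are fine.
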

\begin{proof}
Let $(s_\alpha)_\alpha$ be a net in $K_2$ converging to $s \in K_2$. Then $\delta_{s_\alpha}$ converges weak* to $\delta_s$, and so
\[ \delta_{\theta(s_\alpha)} - \delta_{\theta(s)} = \eps T^*(\delta_{s_\alpha} - \delta_s) \stackrel{\text{weak*}}{\longrightarrow} \eps T^* 0 = 0. \]
Hence $\delta_{\theta(s_\alpha)} \stackrel{\text{weak*}}{\longrightarrow}  \delta_{\theta(s)}$, or equivalently, $\theta(s_\alpha) \to \theta(s)$. So $\theta$ is a continuous bijection from a
compact space into a Hausdorff space and so it is a homeomorphism.
\end{proof}

We are now able to prove our main result on isometric isomorphisms $T \colon \overline{C(K_1)} \to \overline{C(K_2)}$.
\begin{theorem}\label{thm:isometric_isomorphism}
 If $K_1$ and $K_2$ are compact Hausdorff spaces, then  a map $T \colon \overline{C(K_1)} \to \overline{C(K_2)}$ is an isometric isomorphism if and only
if there exist an $\eps \in \{-1,1\}$ and a homeomorphism $\theta \colon K_2 \to K_1$ such that $T \overline{f} = \overline{\eps (f \circ \theta)}$.
\end{theorem}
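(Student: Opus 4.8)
The plan is to push everything through the adjoint $T^*$, using the description of $\mathrm{ext}(B)$ and of $\theta$ already obtained. Recall from the discussion preceding the theorem that $T^*$ is an isometric isomorphism of $(\mathbb{R}\mathbf{1}_2^\perp,\tfrac12\|\cdot\|_{TV})$ onto $(\mathbb{R}\mathbf{1}_1^\perp,\tfrac12\|\cdot\|_{TV})$, that it maps $\mathrm{ext}(B_2)$ bijectively onto $\mathrm{ext}(B_1)$, and that there are $\epsilon\in\{-1,1\}$ and a homeomorphism $\theta\colon K_2\to K_1$ with $T^*(E_s)=\epsilon E_{\theta(s)}$ for every $s\in K_2$.

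The first step is to pin down $T^*$ on individual extreme points. Fix $s\neq t$ in $K_2$. Since $\delta_s-\delta_t\in E_s$ and also $\delta_s-\delta_t=-(\delta_t-\delta_s)\in -E_t$, its image lies in $\epsilon E_{\theta(s)}\cap(-\epsilon E_{\theta(t)})$. Writing out these two sets explicitly, using that $\theta$ is injective so that $\theta(s)\neq\theta(t)$, and that distinct Dirac measures are linearly independent, one forces
\[
T^*(\delta_s-\delta_t)=\epsilon\bigl(\delta_{\theta(s)}-\delta_{\theta(t)}\bigr).
\]
This is the one place that needs a little care, namely checking that the ``other'' Dirac mass appearing is exactly $\theta(t)$ and not some unrelated point of $K_1$; after that the argument is routine.

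Next comes the duality computation. Identify $\overline{C(K_i)}^{\,*}$ with $\mathbb{R}\mathbf{1}_i^\perp$ via $\langle\overline{h},\mu\rangle=\int h\,\d\mu$, which is well defined because $\mu$ annihilates constants. Given $f\in C(K_1)$, pick a representative $g\in C(K_2)$ of $T\overline{f}$. Then for all $s\neq t$ in $K_2$,
\[
g(s)-g(t)=\langle T\overline{f},\delta_s-\delta_t\rangle=\langle\overline{f},T^*(\delta_s-\delta_t)\rangle=\epsilon\bigl(f(\theta(s))-f(\theta(t))\bigr),
\]
where the left-hand side is independent of the chosen representative since $\delta_s-\delta_t\in\mathbb{R}\mathbf{1}_2^\perp$. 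Hence $g-\epsilon(f\circ\theta)$ takes the same value at any two points of $K_2$, so it is a scalar multiple of $\mathbf{1}$; that is, $T\overline{f}=\overline{g}=\overline{\epsilon(f\circ\theta)}$, which is the claimed formula.

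For the converse, suppose $T\overline{f}:=\overline{\epsilon(f\circ\theta)}$ for a homeomorphism $\theta\colon K_2\to K_1$ and $\epsilon\in\{-1,1\}$. Then $T$ is well defined and linear, with inverse $\overline{h}\mapsto\overline{\epsilon(h\circ\theta^{-1})}$, so it is bijective. Since $\theta$ is a bijection of the underlying sets, $\sup_{K_2}(f\circ\theta)=\sup_{K_1}f$ and $\inf_{K_2}(f\circ\theta)=\inf_{K_1}f$, and multiplying by $\epsilon=\pm1$ at most swaps these two, so $\|\overline{\epsilon(f\circ\theta)}\|_{\mathrm{var}}=\|\overline{f}\|_{\mathrm{var}}$; by Lemma~\ref{sup norm on quotient} the same identity holds for $\|\cdot\|_q$, so $T$ is an isometric isomorphism. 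The main obstacle is thus confined to the first step above; the duality argument and the converse are bookkeeping.
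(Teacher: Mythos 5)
Your proposal is correct and follows essentially the same route as the paper: identify $T^*$ on the extreme points $\delta_s-\delta_t$ via the sets $E_s$ and then transfer the formula back by the duality pairing, with the converse checked directly on the variation norm. The only difference is that you spell out the intersection argument showing $T^*(\delta_s-\delta_t)=\epsilon(\delta_{\theta(s)}-\delta_{\theta(t)})$, a step the paper's computation uses implicitly; this is a welcome clarification rather than a new approach.
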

\begin{proof}
 Suppose $T$ is an isometric isomorphism, and let $\theta$ and $\eps$ be such that $T^* E_s = \eps E_{\theta(s)}$. Although point-evaluation for elements
$\overline{g} \in \overline{C(K_2)}$ is not well defined, the values $\overline{g}(s) - \overline{g}(t)$ for $s,t \in K_2$ are well defined, and the
computation
\begin{align*}
 T \overline{f}(s) - T \overline{f}(t) &= (\delta_s - \delta_t)(T \overline{f}) \\
 &= [T^*(\delta_s - \delta_t)](\overline{f}) \\
 &= [\eps (\delta_{\theta(s)} - \delta_{\theta(t)})] (\overline{f}) \\
 &= \eps(\overline{f}(\theta(s)) - \overline{f}(\theta(t)))
\end{align*}
shows that $T$ is induced from the map $f \mapsto \eps (f \circ \theta)$ from $C(K_1)$ into $C(K_2)$.

Conversely, if $\eps \in \{-1,1\}$ and $\theta \colon K_2 \to K_1$ is a homeomorphism, then $f \mapsto \eps (f \circ \theta)$ is an isometric isomorphism
between $C(K_1)$ and $C(K_2)$ that maps $\R \mathbf{1}_1$ onto $\R \mathbf{1}_2$, and hence it induces an isometric isomorphism between the respective quotient
spaces $\overline{C(K_1)}$ and $\overline{C(K_2)}$.
\end{proof}

Our next goal is to describe the group of surjective isometries from $\overline{C(K)}$ to itself. For any real normed space $X$, the Mazur-Ulam theorem shows that any surjective isometry from $X$ to $X$ is the composition of an isometric isomorphism and a translation. Clearly the subgroup of isometric isomorphisms and the subgroup of translations have trivial intersection, and it is easily verified that a translation by $x \in X$ conjugated by an isometric isomorphism $T$ of $X$ yields a translation by $Tx$. This shows that the group of surjective isometries from $X$ to $X$ is a semidirect product of these two subgroups. In the case of $X = \overline{C(K)}$, we know the group of isometric isomorphisms by Theorem~\ref{thm:isometric_isomorphism} which yields the following description.
\begin{proposition}\label{prop:isometry_group}
 If $K$ be a compact Hausdorff space, then the group of surjective isometries of $\overline{C(K)}$ is isomorphic to $\overline{C(K)} \rtimes (C_2 \times \mathrm{Homeo}(K))$ if and only if $|K| \geq 3$, where $C_2$ is the cyclic group of order 2. 
\end{proposition}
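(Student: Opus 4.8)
The plan is to combine the preceding discussion with Theorem~\ref{thm:isometric_isomorphism}. Recall that, via the Mazur--Ulam theorem together with the fact that a translation conjugated by a linear isometry is again a translation, the group $\mathrm{Isom}(\overline{C(K)})$ of surjective isometries is the semidirect product of its subgroup of translations (naturally isomorphic to $\overline{C(K)}$ acting on itself) and its subgroup $\mathrm{Isom}_0(\overline{C(K)})$ of isometric isomorphisms, with the evident action. Hence the content of the proposition is that $\mathrm{Isom}_0(\overline{C(K)})\cong C_2\times\mathrm{Homeo}(K)$, compatibly with the action on $\overline{C(K)}$, exactly when $|K|\geq 3$. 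For $\epsilon\in\{-1,1\}$ and a homeomorphism $\sigma$ of $K$, write $T_{\epsilon,\sigma}$ for the isometric isomorphism $\overline{f}\mapsto\overline{\epsilon(f\circ\sigma)}$ of $\overline{C(K)}$; by Theorem~\ref{thm:isometric_isomorphism} these exist and exhaust $\mathrm{Isom}_0(\overline{C(K)})$. Since $T_{\epsilon_1,\sigma_1}\circ T_{\epsilon_2,\sigma_2}=T_{\epsilon_1\epsilon_2,\,\sigma_2\circ\sigma_1}$, the assignment $\Phi(\epsilon,\theta):=T_{\epsilon,\theta^{-1}}$ defines a surjective group homomorphism $\Phi\colon C_2\times\mathrm{Homeo}(K)\to\mathrm{Isom}_0(\overline{C(K)})$, and it intertwines the natural action of $C_2\times\mathrm{Homeo}(K)$ on $\overline{C(K)}$ with the action of $\mathrm{Isom}_0(\overline{C(K)})$.

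The first main step is to show $\Phi$ is injective when $|K|\geq 3$. Suppose $\Phi(\epsilon,\theta)=\mathrm{id}$; substituting $f\mapsto f\circ\theta$ this means $f\circ\theta=\epsilon f+d_f\mathbf{1}$ for every $f\in C(K)$, with $d_f\in\R$. Since $\theta$ is a bijection, $f\circ\theta$ has the same range as $f$, so comparing suprema forces $d_f=0$ in the case $\epsilon=1$; then $f\circ\theta=f$ for all $f$, and complete regularity of $K$ (separating $x$ from $\theta(x)$ by a continuous function whenever $\theta(x)\neq x$) gives $\theta=\mathrm{id}$, i.e.\ $\Phi(\epsilon,\theta)$ is already the identity of $\mathcal C$-data. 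It remains to exclude $\epsilon=-1$: from $f\circ\theta=-f+d_f\mathbf{1}$ and $(f^2)\circ\theta=(f\circ\theta)^2$, applied also to $f^2$, one gets $2f^2-2d_f f+(d_f^2-d_{f^2})\mathbf{1}=0$ pointwise, so every $f\in C(K)$ takes at most two distinct values. But if $|K|\geq 3$ there is, by Urysohn's lemma, a continuous function on $K$ with three distinct values, a contradiction. Hence $\ker\Phi$ is trivial, $\Phi$ is an isomorphism, and we conclude $\mathrm{Isom}(\overline{C(K)})\cong\overline{C(K)}\rtimes(C_2\times\mathrm{Homeo}(K))$.

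For the converse I would treat the two degenerate cases by direct computation. If $|K|=1$ then $\overline{C(K)}=\{0\}$, so $\mathrm{Isom}(\overline{C(K)})$ is trivial whereas $\overline{C(K)}\rtimes(C_2\times\mathrm{Homeo}(K))\cong C_2$. If $|K|=2$ then $\overline{C(K)}$ is one--dimensional and $\|\cdot\|_{\mathrm{var}}$--isometric to $\R$, so $\mathrm{Isom}(\overline{C(K)})\cong\R\rtimes C_2$, which has trivial centre; on the other hand $\mathrm{Homeo}(K)\cong C_2$, and its nontrivial element acts on $\overline{C(K)}\cong\R$ by $-1$, exactly as the $C_2$ factor does, so $\overline{C(K)}\rtimes(C_2\times\mathrm{Homeo}(K))\cong(\R\rtimes C_2)\times C_2$, which has centre $C_2$. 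In both cases the two groups are not isomorphic.

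The main obstacle is the injectivity of $\Phi$ when $\epsilon=-1$: at the level of linear isometries the candidate kernel element $T_{-1,\mathrm{id}}$ is not distinguished from a homeomorphism, and one must exploit the multiplicative structure of $C(K)$ through the identity $(f^2)\circ\theta=(f\circ\theta)^2$ to see that such a kernel element would force every continuous function to be two--valued. This is precisely the point where the hypothesis $|K|\geq 3$ is used, and it is what distinguishes the present argument from the explicit finite--dimensional computations of \cite{dlH,FM,LW}.
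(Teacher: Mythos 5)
Your proof is correct and follows the same overall strategy as the paper's (the Mazur--Ulam semidirect product decomposition combined with Theorem~\ref{thm:isometric_isomorphism}), but it is substantially more careful than the paper's two-line argument and supplies genuine content in two places where the paper only asserts. First, for $|K|\geq 3$ the paper merely states that ``there is no overlap'' between the sign $-1$ and the homeomorphisms; your kernel computation --- in particular exploiting the multiplicative identity $(f^2)\circ\theta=(f\circ\theta)^2$ to show that a relation $f\circ\theta=-f+d_f\mathbf{1}$ would force every continuous function to take at most two values, contradicting $|K|\geq 3$ --- is an actual proof of this, and the $\eps=1$ case correctly handles faithfulness of the $\mathrm{Homeo}(K)$-action. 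Second, for $|K|=2$ the paper only observes that the natural map $C_2\times\mathrm{Homeo}(K)\to\mathrm{Isom}_0(\overline{C(K)})$ is not injective, which strictly speaking does not exclude an abstract isomorphism between the two groups in the statement; your centre computation ($\R\rtimes C_2$ has trivial centre while $(\R\rtimes C_2)\times C_2$ has centre $C_2$) closes that gap. The only cosmetic omission is the degenerate case $K=\emptyset$, which is handled exactly like $|K|=1$ (trivial isometry group versus $C_2$).
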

Note that in the above proposition, $\theta \in \mathrm{Homeo}(K)$ acts on $\overline{f} \in \overline{C(K)}$ as $\overline{f \circ \theta^{-1}}$ (not as
$\overline{f \circ \theta}$).
\begin{proof}
 If $K$ has only 2 elements, then multiplication by $-1$ coincides with the non-trivial homeomorphism, and if $|K| \leq 1$, then multiplication by $-1$ is the
identity. In all other cases there is no overlap, and the result follows from the above discussion.
\end{proof}

Translating Theorem~\ref{thm:isometric_isomorphism} and Proposition~\ref{prop:isometry_group} back to $(P(C(K_i)^\circ_+), d_H)$ through the pointwise
exponential (the inverse of the pointwise logarithm) yields the following. 

\begin{theorem}\label{hilbert isometry on function space} Let $K_1,K_2$ be compact Hausdorff spaces. A map $$h\colon(P(C(K_1)^\circ_+),d_H)\to
(P(C(K_2)^\circ_+),d_H)$$ is a surjective isometry if and only if there exist  $g\in C(K_2)^\circ_+$, $\epsilon \in\{ -1, 1\}$, and a homeomorphism $\theta\colon K_2\to K_1$ such
that 
\[
h(\overline{f})=\overline{g\cdot(f\circ\theta)^\epsilon}\mbox{\quad for all }\overline{f}\in P(C(K_2)^\circ_+).
\] 
If $K_1=K_2=K $  and $|K| \geq 3$, then the isometry group  is given by 
\[
\mathrm{Isom}(P(C(K)_+^\circ), d_H) \cong \overline{C(K)} \rtimes (C_2 \times \mathrm{Homeo}(K)),
\]
where $C_2$ is the cyclic group of order $2$. 
\end{theorem}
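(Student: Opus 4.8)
The plan is to transport everything through the pointwise logarithm and thereby reduce to the classification of isometric isomorphisms of $\overline{C(K)}$ already obtained in Theorem~\ref{thm:isometric_isomorphism}. Write $\mathrm{Exp}:=\mathrm{Log}^{-1}$ for the pointwise exponential; it is a bijective isometry from $(\overline{C(K)},\|\cdot\|_{\mathrm{var}})$ onto $(P(C(K)^\circ_+),d_H)$, and it is well defined on the quotient since adding a constant to $g\in C(K)$ merely rescales $\exp\circ g$. First I would note that $h\mapsto \mathrm{Log}\circ h\circ\mathrm{Exp}$ is a bijection from the set of surjective $d_H$-isometries $(P(C(K_1)^\circ_+),d_H)\to(P(C(K_2)^\circ_+),d_H)$ onto the set of surjective isometries $(\overline{C(K_1)},\|\cdot\|_{\mathrm{var}})\to(\overline{C(K_2)},\|\cdot\|_{\mathrm{var}})$, because $\mathrm{Log}$ is a bijective isometry; and by Lemma~\ref{sup norm on quotient} the norm $\|\cdot\|_{\mathrm{var}}$ coincides with the quotient norm $\|\cdot\|_q$, so these are exactly the surjective isometries discussed just before Theorem~\ref{thm:isometric_isomorphism}.

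So let $T:=\mathrm{Log}\circ h\circ\mathrm{Exp}$. By the Mazur-Ulam theorem $T$ is affine, hence $T=T_0+\overline{b}$ for a translation $\overline{b}\in\overline{C(K_2)}$ and an isometric isomorphism $T_0$, and Theorem~\ref{thm:isometric_isomorphism} supplies $\epsilon\in\{-1,1\}$ and a homeomorphism $\theta\colon K_2\to K_1$ with $T_0\overline{f}=\overline{\epsilon(f\circ\theta)}$. Choosing a representative $b\in C(K_2)$ of $\overline{b}$ and putting $g:=\exp\circ b\in C(K_2)^\circ_+$, one computes, using $h=\mathrm{Exp}\circ T\circ\mathrm{Log}$,
\begin{align*}
 h(\overline{f}) &= \mathrm{Exp}\big(T_0(\mathrm{Log}\,\overline{f})+\overline{b}\big) = \mathrm{Exp}\big(\overline{\epsilon\log(f\circ\theta)+b}\big) \\
 &= \overline{\exp\big(\epsilon\log(f\circ\theta)+b\big)} = \overline{g\cdot(f\circ\theta)^\epsilon},
\end{align*}
which is the asserted form. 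Reading the same chain of equalities backwards shows, conversely, that any map of this form is $\mathrm{Exp}$ conjugating the affine surjective isometry $\overline{f}\mapsto\overline{\epsilon(f\circ\theta)}+\overline{b}$ of $\overline{C(K_1)}$ onto $\overline{C(K_2)}$, hence is itself a surjective $d_H$-isometry. (One should note in passing that $g$, $\epsilon$ and $\theta$ are not unique; for instance $g$ may be replaced by any positive scalar multiple.)

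For the group statement I would take $K_1=K_2=K$ with $|K|\geq 3$. Then $\phi\mapsto\mathrm{Log}\circ\phi\circ\mathrm{Log}^{-1}$ is a group isomorphism from $\mathrm{Isom}(P(C(K)^\circ_+),d_H)$ onto $\mathrm{Isom}(\overline{C(K)},\|\cdot\|_{\mathrm{var}})=\mathrm{Isom}(\overline{C(K)},\|\cdot\|_q)$, which is isomorphic to $\overline{C(K)}\rtimes(C_2\times\mathrm{Homeo}(K))$ by Proposition~\ref{prop:isometry_group}. I do not expect a serious obstacle here: the substance lies in the Banach-Stone-style analysis behind Theorem~\ref{thm:isometric_isomorphism}, in the Mazur-Ulam reduction, and in Lemma~\ref{sup norm on quotient}. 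The only points needing care are that $\mathrm{Exp}$ is well defined on the quotient (equivalently on the projective space), keeping straight that the homeomorphism $\theta$ runs from $K_2$ to $K_1$, and using that $\|\cdot\|_q=\|\cdot\|_{\mathrm{var}}$ so that Proposition~\ref{prop:isometry_group} applies verbatim.
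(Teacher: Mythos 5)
Your proof is correct and follows essentially the same route as the paper: the paper likewise obtains this theorem by conjugating with the pointwise logarithm and invoking the Mazur--Ulam reduction, Theorem~\ref{thm:isometric_isomorphism}, and Proposition~\ref{prop:isometry_group}; you merely write out the transport computation $\mathrm{Exp}\circ T\circ\mathrm{Log}$ explicitly, which the paper leaves implicit.
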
 
If $\mu_i$ ($i \in \{1,2\}$) is a strictly positive measure on $K_i$, and we identify $P(C(K_i)^\circ_+$ with $\Delta(K_i, \mu_i)$, we obtain
Theorem~\ref{thm:1.2}.
\begin{remark}
 In the above theorem, everything is projectively linear except for the inversion. It follows that if $K$ is a compact Hausdorff space with at least 3
elements, the index of the collineation group in the isometry group $\mathrm{Isom}(P(C(K)^\circ_+),d_H)$ equals 2.
\end{remark}

Theorem \ref{hilbert isometry on function space} has the following interesting consequence. 
\begin{corollary}
If $K_1, K_2$ are compact Hausdorff spaces,  then $(P(C(K_1)_+^\circ),d_H)$ and $(P(C(K_2)_+^\circ),d_H)$ are isometric if and only if $K_1$ and $K_2$ are homeomorphic.
\end{corollary}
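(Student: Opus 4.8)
The plan is to deduce this immediately from the classification of surjective isometries in Theorem~\ref{hilbert isometry on function space}, exactly as the Banach--Stone theorem yields $C(K_1)\cong C(K_2)$ if and only if $K_1\cong K_2$.

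For the ``if'' direction, suppose $\theta\colon K_2\to K_1$ is a homeomorphism. Applying Theorem~\ref{hilbert isometry on function space} with $g=\mathbf{1}$ and $\epsilon=1$, the formula $h(\overline{f}):=\overline{f\circ\theta}$ defines a surjective isometry from $(P(C(K_1)^\circ_+),d_H)$ onto $(P(C(K_2)^\circ_+),d_H)$, so the two Hilbert geometries are isometric. (This can also be checked by hand: $f\mapsto f\circ\theta$ is an order isomorphism of $C(K_1)^\circ_+$ onto $C(K_2)^\circ_+$ that preserves the quantities $M(\cdot/\cdot)$, hence $d_H$, and it descends to the projective spaces.)

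For the ``only if'' direction, assume the two metric spaces are isometric, i.e.\ there is a surjective isometry $h\colon(P(C(K_1)^\circ_+),d_H)\to(P(C(K_2)^\circ_+),d_H)$. Theorem~\ref{hilbert isometry on function space} tells us that $h(\overline{f})=\overline{g\cdot(f\circ\theta)^\epsilon}$ for some $g\in C(K_2)^\circ_+$, $\epsilon\in\{-1,1\}$ and, crucially, a homeomorphism $\theta\colon K_2\to K_1$. Hence $K_1$ and $K_2$ are homeomorphic.

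I do not expect a genuine obstacle: the corollary is a direct consequence of Theorem~\ref{hilbert isometry on function space}. The only point worth keeping in mind is that ``isometric'' is understood here as the existence of a \emph{surjective} isometry, so that the classification applies verbatim; dropping surjectivity would require the strictly stronger assertion that no Hilbert geometry of this type isometrically embeds into one over a non-homeomorphic base space, which is neither claimed nor needed.
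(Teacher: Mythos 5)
Your proof is correct and is exactly the argument the paper intends: the corollary is stated as an immediate consequence of Theorem~\ref{hilbert isometry on function space}, with the ``only if'' direction reading off the homeomorphism $\theta$ from the classification and the ``if'' direction using composition with $\theta$ (with $g=\mathbf{1}$, $\epsilon=1$) to produce the isometry. Your remark about surjectivity being built into the meaning of ``isometric'' here is a sensible clarification and matches the paper's usage.
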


It would be interesting to study non-commutative versions of Theorem \ref{hilbert isometry on function space}. In particular, one could  look at Hilbert's metric isometries on 
the interior of the cone $A_+$ of positive self-adjoint elements in a unital $C^*$-algebra.  In view of the characterisation of  Thompson's metric isometries on $A_+^\circ$ by Hatori and Moln\'ar \cite{HM}, it seems plausible that each Hilbert's metric isometry $h\colon P(A_+^\circ)\to P(A_+^\circ)$ is of the form $h(\overline{a}) =\overline{h(e)^{1/2}J(a^{\epsilon})h(e)^{1/2}}$, 
where $\epsilon \in \{-1,1\}$ is fixed, $e$ is the unit in $A$, and $J$ is a Jordan* isomorphism. 

More generally is seems worthwhile to investigate if Walsh's results in \cite{W2} can be extended to infinite dimensions. In \cite{W2} Walsh showed that every isometry of a finite dimensional Hilbert geometry is a projective linear automorphism, except when the domain comes from a non-Lorentzian symmetric cone. It is well known that the symmetric cones in finite dimensional vector spaces are precisely the interiors of the cones of squares of Euclidean Jordan algebras by the fundamental work of Koecher \cite{Koe} and Vinberg \cite{Vin}. Thus, Walsh's result provides a link between the existence of an isometry of $(\Omega,d_H)$ that is not a projective linear automorphism and a Jordan algebra structure on the vector space above $\Omega$. It might well be true that in a general order unit space $(X,C,u)$ we have that the existence of  a Hilbert's metric  isometry on $P(C^\circ)$ that is not a projective linear automorphism  implies  that $X$ has a Jordan algebra structure and $C$ is the cone of squares.

\end{document}